 \newcommand{\margp}[1]{}
\numberwithin{equation}{section}
\renewcommand{\phi}{\varphi}
\DeclareSymbolFont{SY}{U}{psy}{m}{n}
\DeclareMathSymbol{\emptyset}{\mathord}{SY}{'306}
\DeclareMathOperator{\rank}{rank}
\DeclareMathOperator{\Ran}{Ran}
\DeclareMathSymbol{\newtimes}{\mathbin}{SY}{'264}
\newcommand{\TT}{\wt T} 
\newcommand{\HH}{\wt \cH} 
\newcommand{\hatP}{\wt P} 
\newcommand{\wt}{\widetilde}
\newcommand\weaktends {\overset {\text {\rm WOT} }{\longrightarrow}}
\newcommand\strongtends {\overset {\text {\rm SOT} }{\longrightarrow}}
\newcommand{\cHknot}{\cH_0}
\renewcommand{\phi}{\varphi}
\newcommand{\de}{\delta}
\newcommand{\T}{\mathbb{T}}
\newcommand{\C}{\mathbb{C}}
\newcommand{\Z}{\mathbb{Z}}
\newcommand{\N}{\mathbb{N}}
\newcommand{\cA}{{\mathcal A}}
\newcommand{\cB}{{\mathcal B}}
\newcommand{\cF}{{\mathcal F}}
\newcommand{\cH}{{\mathcal H}}
\newcommand{\cK}{{\mathcal K}}
\newcommand{\gothh}{\mathfrak h}
\newcommand{\cKK}{{\mathcal R}}        
\newcommand{\cL}{{\mathcal L}}
\newcommand{\cO}{{\mathcal O}}
\newcommand{\cR}{{\mathcal R}}
\newcommand{\cS}{{\mathcal S}}
\newcommand{\cT}{{\mathcal T}}
\newcommand{\cW}{{\mathcal W}}
\newcommand{\eps}{\varepsilon} 
\newcommand\beqn{\begin{equation}}
\newcommand\neqn{\end{equation}}
\newcommand\cPfin{{\mathcal P}_\text{fin}}
\newcommand{\1}{\mathbbm 1}
\newtheorem{theorem}{Theorem}[section]{\bf}{\it}
\newtheorem*{vartheorem}{Theorem}
\newtheorem{proposition}[theorem]{Proposition}{\bf}{\it}
\newtheorem{corollary}[theorem]{Corollary}{\bf}{\it}
{\it}{\rm}
\newtheorem{lemma}[theorem]{Lemma}{\bf}{\it}
\newtheorem{remark}[theorem]{Remark}{\it}{\rm}
{\it}{\rm}
\newtheorem{definition}[theorem]{Definition}{\bf}{\it}
{\bf}{\it}
{\bf}{\it}
{\bf}{\it}
\DeclareMathAlphabet{\Ma}{U}{msa}{m}{n}
\DeclareMathAlphabet{\Mb}{U}{msb}{m}{n}
\DeclareMathAlphabet{\Meuf}{U}{euf}{m}{n}
\DeclareSymbolFont{ASMa}{U}{msa}{m}{n}
\DeclareSymbolFont{ASMb}{U}{msb}{m}{n}
\DeclareMathSymbol{\hrist}{\mathord}{ASMa}{"16}
\DeclareMathSymbol{\varkappa}{\mathalpha}{ASMb}{"7B}
\DeclareMathSymbol{\CrPr}{\mathord}{ASMb}{"6F}
\def\got#1{\Meuf{#1}}
\def\ot #1.{{\got{#1}}}
\title[F\o lner sequences and finite operators]
{\flushleft {  \today} \\
\vskip.5cm
F\o lner sequences and finite operators
}
\author{Fernando Lled\'o}
\address{Department of Mathematics, University Carlos~III Madrid,
  Avda.~de la Universidad~30, E-28911 Legan\'es (Madrid), Spain
  and Instituto de Ciencias Matem\'{a}ticas (CSIC - UAM - UC3M - UCM)}
\email{flledo@math.uc3m.es}
\author{Dmitry V. Yakubovich}
\address{Departamento de Matem\'{a}ticas,
Universidad Autonoma de Madrid, Cantoblanco 28049 (Madrid) Spain
\enspace and
\newline
\phantom{r} Instituto de Ciencias
Matem\'{a}ticas (CSIC - UAM - UC3M - UCM)}
\email{dmitry.yakubovich@uam.es}
\subjclass[2010]{47B20, 47A65, 43A07, 46L05}
\keywords{Finite operators, essentially normal operators, quasinormal operators, non-normal operators,
F\o lner sequences, C*-algebras, Cuntz algebras}
\thanks{
The first-named author is partly supported by the project MTM2009-12740-C03-01
of the Spanish Ministry of Science and Innovation.
Part of this work was done while one of use (F. Ll.) was visiting the
Centre de Recerca Matem\`atica in Barcelona.
The second-named author has been supported by the
Project MTM2008-06621-C02-01, DGI-FEDER,
of the Ministry of Science and Innovation of Spain.
Both authors acknowledge the support
by the ICMAT Severo Ochoa project SEV-2011-0087
of the Ministry of Economy and Competition of Spain.
}
\begin{document}

\begin{abstract}
%
%
%
This article analyzes F\o lner sequences of projections for bounded linear operators
and their relationship
to the class of finite operators introduced by Williams in the 70ies.
We prove that each essentially hyponormal operator has a proper F\o lner sequence
(i.e. a F\o lner sequence of projections strongly converging to $\1$).
In particular, any quasinormal, any subnormal, any hyponormal and any
essentially normal operator has a proper F\o lner sequence.
Moreover, we show that an operator is finite if and only if it has a
proper F\o lner sequence
or if it has a non-trivial finite dimensional reducing subspace.
We also analyze the structure of operators which have
no F\o lner sequence and give examples of them. For this analysis we
introduce the notion of strongly non-F\o lner operators,
which are far from finite block reducible operators, in some uniform sense,
and show that this class coincides with the class of non-finite operators.
\end{abstract}
\maketitle

\tableofcontents


\section{Introduction}\label{sec:intro}

The notion of a F\o lner sequence was introduced in the context of groups
to give a new characterization of amenability.
A discrete countable group $\Gamma$ is amenable if it has an invariant
mean, i.e., there is a state $\psi$
on the von Neumann algebra $\ell^\infty(\Gamma)$ such that
\[
  \psi(u_\gamma g)=\psi(g)\;,\quad \gamma\in\Gamma\;,\quad g\in \ell^\infty(\Gamma)\;,
\]
where $u$ is the left-regular representation of $\Gamma$ on $\ell^2(\Gamma)$.
A F{\o}lner sequence for a countable discrete group
$\Gamma$ is an increasing sequence of non-empty finite subsets $\Gamma_n\subset\Gamma$
with $\Gamma=\cup_n \Gamma_n$ and that satisfy
\begin{equation}\label{eq:foelner-g}
\lim_{n}
\frac{|(\gamma \Gamma_n)\triangle
\Gamma_n|}{|\Gamma_n|} =0\qquad\text{for all}\quad \gamma\in\Gamma \,,
\end{equation}
where $\triangle$ denotes the symmetric difference and $|\Gamma_n|$
is the cardinality of $\Gamma_n$. Then, $\Gamma$ has a F{\o}lner sequence
if and only if $\Gamma$ is amenable (cf.~Chapter~4 in \cite{bPaterson88};
see also \cite{Vershik82} for a review stressing the fundamental fact
that amenable groups are those which can be approximated by finite groups).

F\o lner sequences were introduced
in the context of operator algebras by Connes in Section~V of his seminal
paper \cite{Connes76} (see also \cite[Section~2]{ConnesIn76}).
This notion is an algebraic analogue of F\o lner's characterization
of amenable discrete groups and was used by Connes as an essential tool in the
classification of injective type~II$_1$ factors.
If $\mathcal{H}$ is a Hilbert space, we will denote by
$\cL(\cH)$ the algebra of bounded linear operators on $\mathcal{H}$.
In this article all Hilbert spaces will be complex and separable.
\begin{definition}\label{def:Foelner}
Let $\mathcal{H}$ be a
Hilbert space, and let
$\mathcal{T}\subset\cL(\cH)$. A
sequence of non-zero
finite rank orthogonal projections $\{P_n\}_{n\in \N}$ on $\cH$
is called a F{\o}lner sequence for $\mathcal{T}$ if
\begin{equation}\label{eq:F1}
\lim_{n} \frac{\|T P_n-P_n T\|_2}{\|P_n\|_2} = 0\;\;,\quad T\in\cT\; ,
\end{equation}
where $\|\cdot\|_2$ denotes the Hilbert-Schmidt norm.
If the F\o lner sequence $\{P_n\}_{n\in \N}$ satisfies, in addition, that it is increasing
and converges strongly to $\1$, then we say it is a proper F\o lner sequence.
\end{definition}

%

Normal operators, compact operators
and Toeplitz operators with $L^\infty$ symbol are examples of operators with
\margp{LEERLO OTRA VEZ}
a proper F\o lner sequence (cf.~\cite[Chapter~7]{bHagen01}).

To simplify expressions we will often use the following notation:
for $T\in\cL(\cH)$ and $P$ a non-zero finite rank orthogonal projection, put
\begin{equation}
\label{def-phi}
 \phi(T,P):= \frac{\|TP-PT\|_2}{\|P\|_2}\;.
\end{equation}

There is a canonical relation between the group theoretic and operator algebraic
notions of F\o lner sequences in terms of the group algebra.
Let $\Gamma$ be a discrete, countable and amenable group and
$\{\Gamma_n\}_{n\in\N}\subset\Gamma$ a
F\o lner sequence. Denote by $P_n$ the
orthogonal finite-rank projections from
$\ell^2(\Gamma)$ onto $\ell^2(\Gamma_n)$.  Then
$\{P_n\}_{n\in\N}$ is a proper F{\o}lner sequence for the group
C*-algebra of $\Gamma$.
(In Proposition~4 in \cite{Bedos97}, a stronger result is shown:
the sequence $\{P_n\}_{n\in\N}$ mentioned before is a proper F\o lner sequence even for
the group von Neumann algebra, i.e., for the weak operator closure
of the algebra generated by the left-regular representation of
$\Gamma$ on $\ell^2(\Gamma)$. In general, if a
C*-algebra $\cA\subset\cL(\cH)$ has a F\o lner sequence, then
its weak closure needs not have one.)

In addition to these theoretical developments, F\o lner sequences have been
used in spectral approximation problems: given
an operator $T$ on a complex Hilbert space $\cH$
and a sequence of matrices (or linear operators) $\{T_n\}_{n\in\N}$
that approximate $T$ in some sense, a natural question is
whether the spectral objects of $T_n$ tend to those of $T$ as $n$ grows.
There are many references that treat this question from different points
of view. Some standard textbooks that contain many examples and an extensive
list of references are \cite{bAhues01,bChatelin83,bHagen01}.
B\'edos used F\o lner sequences for operators in the context of eigenvalue
distribution problems (cf.~\cite{Bedos97}) and refined
earlier results by Arveson stated in \cite{Arveson94,ArvesonIn94}.
See also the introduction in \cite{pLledo11} and references cited therein.
It is worth mentioning that in the last two decades, the
relation between spectral approximation problems and F\o lner sequences
for non-selfadjoint and non-normal operators has been also
explored, see for instance \cite{Widom94,Tilli99,BottchOtte05,Roch07}.
Notice that in the context of spectral approximation, proper F\o lner sequences
are important. 

The second important concept for this article is the class of
finite operators. They were introduced and analyzed in a
classical article by Williams (cf., \cite{Williams70}). Finite
operators $T\in\cL(\cH)$ on an infinite dimensional Hilbert space
$\cH$ have the property that $0$ is in the closure of the
numerical range of the commutator $TX-XT$ for all $X\in\cL(\cH)$
(see also Section~\ref{sec:foelner} for a formal definition and
additional results). As Williams explains in his article the name
'finite' is admittedly {\em ad hoc}. It comes from the fact that
the class of finite operators contains the closure of all finite
block reducible operators (i.e., operators having a non-trivial
finite dimensional reducing subspace).

The aim of the present paper is to analyze the role of F\o lner sequences in the context of
a single operator and in relation to the class of finite operators.
If $\{P_n\}_{n\in\N}$ is a F\o lner sequence for $T\in\cL(\cH)$,
then 
we have with respect to the decomposition
$\cH=P_n\cH\oplus (P_n\cH)^\perp$
\[
 T\cong\begin{pmatrix}
        T_1^{(n)} &  T_2^{(n)} \\
        T_3^{(n)} &  T_4^{(n)}
       \end{pmatrix}
\quad\mathrm{and}\quad
 TP_n-P_nT\cong
       \begin{pmatrix}
        0 &  -T_2^{(n)} \\
        T_3^{(n)} &  0
       \end{pmatrix}\;.
\]
where $T_1^{(n)}$ is a finite dimensional operator on $P_n\cH$. Therefore
\[
 \frac{\|T P_n-P_n T\|_2}{\|P_n\|_2}=\frac{\|T_2^{(n)}\|_2+\|T_3^{(n)}\|_2}{\|P_n\|_2}\;,
\]
so that the condition in (\ref{eq:F1}) describes the growth of the off-diagonal
blocks of $T$ (in the Hilbert-Schmidt norm) relative to the dimension
of the corresponding subspaces as $n\to\infty$.

In Section~\ref{sec:foelner} we will present some consequences of the existence of F\o lner sequences
for operators and give useful characterizations of it. We also mention standard results for the class
of finite operators.

In Section~\ref{sec:NF} we explore the structure of operators without a F\o lner sequence.
We define a strongly non-F\o lner operator as an operator $T$ such that
there is some positive number $\eps$ with $\phi(T,P)\ge \eps$ for all non-zero finite rank projections $P$ on $\cH$.
In a sense, this condition means that $T$ has to be far from the set of finite block reducible
operators. In Theorem~\ref{thm:str-foln}, we show that any operator with no F\o lner sequence is
the direct sum of an operator on a finite-dimensional space (which can be possibly zero)
and a strongly non-F\o lner operator.

In Section~\ref{sec:relation}, we show the natural relation between the
\margp{RELEERLO}
notion of proper F\o lner sequences and the class
of finite operators.
To describe our results with more detail, let us divide
the set of all operators in $\cL(\cH)$ into four mutually
disjoint classes, according to the following table:
\vskip3mm

\begin{table}[h]
\centering 
  \begin{tabular}{| c || c | c | }
    \hline
    \phantom{phantom}   &  Operators with a proper   &    Operators with no proper \\[0.5ex]
    \phantom{phantom}   &  F\o lner sequence         &    F\o lner sequence          \\
                           \hline\hline
Finite block reducible     &             $\mathcal{W}_{0+}$       &                  $\mathcal{W}_{0-}$    \\ 
\hline
Non finite block reducible &            $\mathcal{W}_{1+}$        &          $\mathcal{S}$     
\\ 
    \hline
  \end{tabular}
\vskip.3cm
 \caption{} 
\end{table}
\noindent As we will prove in Theorem~\ref{teo:williams-foelner},
\begin{itemize}
\item[] {\em an operator is finite if and only if it has a proper F\o lner sequence or it is finite block
reducible.}
\end{itemize}
Therefore the class of finite operators is the disjoint union of the
operators in the classes $\mathcal{W}_{0+}$, $\mathcal{W}_{0-}$ and $\mathcal{W}_{1+}$.
Moreover, an operator is strongly non-F\o lner if and only if it is
not finite, i.e., it belongs to $\mathcal{S}$.
This implies that
the class of strongly non-F\o lner operators is open and dense in
$\cL(\cH)$. We refer to Section~\ref{sec:relation} and to the end of
Section~\ref{sec:examples} for more details.

In Section~\ref{sec:with-FS} we analyze several classes
of non-normal operators and show that each operator
from any of these classes has a proper F\o lner sequence.
In Theorem~\ref{main1}, we show that any essentially hyponormal operator
(i.e., any $T$ such that $T^*T-TT^*$ defines a
nonnegative element of the Calkin algebra) has a proper F\o lner sequence.
This implies that important classes of operators, like, e.g., essentially
normal or hyponormal operators, have also proper F\o lner sequences.

%
In Section~\ref{sec:examples}, we give examples of operators which have no F\o lner sequence. In the
example stated in Proposition~\ref{no-Foelner} we use the fact
that the Cuntz algebra is singly generated as a C*-algebra.
In a subsequent paper \cite{pLledoYakubovich12}
we will discuss asymptotic properties of finite square matrices, related to the property of the existence of a
proper F\o lner sequence for an infinite dimensional linear operator.

\section{Basic properties of F\o lner sequences and finite operators}\label{sec:foelner}

In this section we recall the basic definition and results
concerning F\o lner sequences for operators. We will also
discuss some standard properties of finite operators.
%
%
%
%
In what follows, if $T$ is a linear operator on a Hilbert space $\cH$, we
denote by $\|T\|_p$ its norm in the Schatten-von Neumann
class.
We denote by $\cPfin(\cH)$ the set of all non-zero
finite rank orthogonal projections on $\cH$.


The existence of a proper F\o lner sequence for an operator $T$ is a weaker
property than quasidiagonality.
Recall that an operator $T\in\mathcal{L}(\mathcal{H})$
is said to be {\em quasidiagonal} if there exists a sequence of finite rank orthogonal projections
$\{P_n\}_{n\in \N}$ converging strongly to $\1$ and such that
\begin{equation}\label{QD}
\lim_{n}\|T P_n-P_n T\|=0\;.
\end{equation}
This notion was introduced by Halmos in \cite{Halmos70}
(see also \cite{Voiculescu93} for a review that also relates the concept of quasidiagonality
to other fields like, e.g., C*-algebras).
The existence of a proper F\o lner sequence can be
understood as a kind of quasidiagonality condition relative to the growth of
the dimension of the underlying spaces. It can be shown that
if the sequence of non-zero finite rank orthogonal projections
$\{P_n\}_n$ quasidiagonalizes an operator $T$, then
it is also a proper F\o lner sequence for $T$.
Examples of quasidiagonal operators are compact operators, block-diagonal operators
or normal operators.
Abelian C*-algebras or the set of compact operators $\cK(\cH)$ are examples of
quasidiagonal C*-algebras (cf.~\cite{bBrown08}).

The next result collects some easy consequences of the definition of a
(proper) F\o lner sequence for operators.

\begin{proposition}\label{pro-1}
Let $\cT\subset\cL(\cH)$ be a set of operators and $\{P_n\}_{n\in \N}$
a sequence of finite rank orthogonal projections strongly converging to $\1$.
Then we have
\begin{itemize}
 \item[(i)] $\{P_n\}_{n\in \N}$ is a F\o lner sequence for $\cT$ if and only if it is a F\o lner sequence for
$C^*\big(\,\cT\,,\,\,\1 \big)$, where $C^*(\cdot)$ is the C$^*$-algebra generated by its argument. Moreover,
$\{P_n\}_{n\in \N}$ is a proper F\o lner sequence for $\cT$ if and only if it is a proper F\o lner sequence for
\[
  C^*\big(\,\cT\,,\, \cK(\cH)\,,\,\1 \big)\;.
\]
 \item[(ii)] Let $\dim \cH=\infty$ and $\{P_n\}_{n\in \N}$ be a proper F{\o}lner sequence for $\cT$.
  Given a sequence $\{L_l\}_{l\in\N}$ of natural numbers with
  $L_l\to\infty$, there exists a sequence $\{Q_l\}_{l\in\N}\subset\cK(\cH)$
  of finite rank orthogonal projections which is a proper F\o lner sequence for
  $\cT$ and $\dim Q_l\cH\geq L_l$, $l\in\N$.
\item[(iii)]
$\{P_n\}_{n\in \N}$ is a F{\o}lner sequence for $\cT$ if and only if
the following condition holds:
\begin{equation}\label{F1}
\lim_{n} \frac{\|T P_n-P_n T\|_1}{\|P_n\|_1} = 0\; .
\end{equation}
\end{itemize}
\end{proposition}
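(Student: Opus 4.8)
The plan is to prove the three items in order, using only elementary properties of the Schatten norms and the standard fact that norm-convergence on generators propagates to the generated C*-algebra.

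For item (i), the key observation is that the map $T \mapsto \|TP_n - P_nT\|_2/\|P_n\|_2 = \phi(T,P_n)$ behaves well under algebraic operations: it is subadditive (triangle inequality), scales linearly under multiplication by scalars, satisfies $\phi(T^*,P_n) = \phi(T,P_n)$ since $\|A^*\|_2 = \|A\|_2$ and $P_n = P_n^*$, and obeys a Leibniz-type bound $\phi(ST,P_n) \le \|S\|\,\phi(T,P_n) + \phi(S,P_n)\,\|T\|$ coming from $STP_n - P_nST = S(TP_n - P_nT) + (SP_n - P_nS)T$. Hence the set of operators $S$ for which $\phi(S,P_n)\to 0$ is a $*$-subalgebra containing $\1$, and a short estimate shows it is norm-closed: if $\|S - S_k\|\to 0$ then $\phi(S,P_n) \le \phi(S_k,P_n) + 2\|S-S_k\|$ because $\|(S-S_k)P_n - P_n(S-S_k)\|_2 \le 2\|S-S_k\|\,\|P_n\|_2$. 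Therefore $\{P_n\}$ is F\o lner for $\cT$ iff it is F\o lner for $C^*(\cT,\1)$. For the second assertion, since $P_n \to \1$ strongly, every compact $K$ satisfies $\phi(K,P_n)\to 0$: approximate $K$ in norm by finite-rank operators $F$, note $\|FP_n - P_nF\|_2 \le 2\|F\|\,\operatorname{rank}(F)^{1/2}$ is \emph{bounded} while $\|P_n\|_2 = (\dim P_n\cH)^{1/2}\to\infty$, so $\phi(F,P_n)\to 0$, and then use the norm-closedness just established; adjoining $\cK(\cH)$ to the algebra therefore does not destroy the (proper) F\o lner property, and the converse is trivial.

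For item (ii), given a proper F\o lner sequence $\{P_n\}$ and $L_l\to\infty$, I would build $\{Q_l\}$ by the following diagonal-type selection. Since $\dim P_n\cH \to \infty$, pick an increasing sequence of indices $n_l$ with $\dim P_{n_l}\cH \ge L_l$; setting $Q_l := P_{n_l}$ gives a subsequence, which is still increasing, still converges strongly to $\1$, still satisfies the F\o lner condition for every $T\in\cT$ (a subsequence of a null sequence is null), and has $\dim Q_l\cH \ge L_l$. All $Q_l$ are finite rank, hence compact. This is immediate once one notes that passing to a subsequence preserves all the defining properties.

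For item (iii), the content is that the Hilbert-Schmidt normalization in \eqref{eq:F1} and the trace-norm normalization in \eqref{F1} define the same notion. Write $A_n := TP_n - P_nT$, which has rank at most $2\operatorname{rank}(P_n) =: 2r_n$. One direction uses $\|A_n\|_2 \le \|A_n\|_1$ together with $\|P_n\|_2 = r_n^{1/2}$ and $\|P_n\|_1 = r_n$, giving $\phi(T,P_n) = \|A_n\|_2/r_n^{1/2}$, while $\|A_n\|_1/r_n$ controls $\|A_n\|_2/r_n$; the sharper link is the Cauchy--Schwarz inequality for singular values, $\|A_n\|_1 \le (\operatorname{rank} A_n)^{1/2}\,\|A_n\|_2 \le (2r_n)^{1/2}\|A_n\|_2$, so that
\[
 \frac{\|A_n\|_1}{\|P_n\|_1} \;=\; \frac{\|A_n\|_1}{r_n} \;\le\; \frac{(2r_n)^{1/2}\|A_n\|_2}{r_n} \;=\; \sqrt{2}\,\frac{\|A_n\|_2}{r_n^{1/2}} \;=\; \sqrt{2}\,\phi(T,P_n),
\]
and conversely $\|A_n\|_2 \le \|A_n\|_1$ yields $\phi(T,P_n) = \|A_n\|_2/r_n^{1/2} \le \|A_n\|_1/r_n^{1/2}$, which is not directly $\|A_n\|_1/r_n$; so for the reverse implication one instead interpolates, e.g. $\|A_n\|_2^2 \le \|A_n\|_1\,\|A_n\|$ and $\|A_n\|\le 2\|T\|$, giving $\phi(T,P_n)^2 = \|A_n\|_2^2/r_n \le 2\|T\|\,\|A_n\|_1/r_n$, so \eqref{F1} implies \eqref{eq:F1}. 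Thus the two conditions are equivalent.

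The only place requiring a little care is the norm-closedness step in item (i) and the compactness observation feeding into it; everything else is a routine manipulation of Schatten norms and subsequences. I expect the interpolation inequality $\|A\|_2^2 \le \|A\|_1\|A\|$ (or equivalently the singular-value Cauchy--Schwarz bound) to be the one nontrivial ingredient in item (iii), and it is entirely standard.
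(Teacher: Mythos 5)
Your proof is correct, and for items (i) and (ii) it follows essentially the same route as the paper (the paper dismisses the $C^*(\cT,\1)$ equivalence as obvious and handles compacts via the quasidiagonalization $\|KP_n-P_nK\|\to 0$ coming from $\|KP_n-K\|\to 0$, whereas you spell out the $*$-algebra/norm-closure argument and estimate $\phi(F,P_n)\le 2\|F\|_2/\|P_n\|_2$ for finite-rank $F$ directly; both are fine, and your version has the minor merit of being self-contained). The genuine difference is in item (iii): the paper simply reduces to $C^*(\cT,\1)$ via (i) and cites Lemma~1 of B\'edos, while you prove the equivalence from scratch using the singular-value Cauchy--Schwarz bound $\|A\|_1\le(\rank A)^{1/2}\|A\|_2$ with $\rank(TP_n-P_nT)\le 2\rank P_n$ in one direction, and the interpolation $\|A\|_2^2\le\|A\|\,\|A\|_1$ with $\|TP_n-P_nT\|\le 2\|T\|$ in the other; both inequalities are applied correctly and the normalizations $\|P_n\|_2=r_n^{1/2}$, $\|P_n\|_1=r_n$ are handled properly, so this is a valid elementary replacement for the citation. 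The only points worth a sentence of justification in a final write-up are that $\dim P_n\cH\to\infty$ for a proper F\o lner sequence on an infinite-dimensional $\cH$ (an increasing sequence of finite-rank projections with bounded rank stabilizes and cannot converge strongly to $\1$), which you use both for the compact-operator estimate in (i) and for the choice of the subsequence in (ii).
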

\begin{proof}
(i) It is obvious that $\{P_n\}_{n\in \N}$ is a F\o lner sequence for $\cT$ if and only if it is a F\o lner sequence for
$C^*(\cT,\1)$. Moreover, if $\{P_n\}_{n\in \N}$ is proper, then it also
satisfies $\|KP_n-K\|\to 0$ for any $K\in\cK(\cH)$. This implies that
$\|KP_n-P_n K\|\to 0$, i.e., $\{P_n\}_{n\in \N}$ quasidiagonalizes any compact
operator and, therefore, $\{P_n\}_{n\in \N}$ is also a proper F\o lner sequence for $T+K$
for any $T\in \cT$, $K\in \cK(\cH)$.

For (ii), just notice that
we can choose an increasing subsequence $Q_l=P_{n_l}$ such that
$\dim Q_l\cH\geq L_l$ and $\lim_{l\to\infty} Q_l=\1$. Then
Eq.~(\ref{eq:F1}) will be satisfied replacing $P_n$ by $Q_n$.

(iii) By item (i) we have that $\{P_n\}_{n\in\N}$ is a F{\o}lner sequence for
$\cT$ if and only if it is a F{\o}lner sequence for $C^*(\cT,\1)$ and
we can apply Lemma~1 in \cite{Bedos97}.
\end{proof}

%
If $P$ and $Q$ are orthogonal projections, we will denote by
$P\vee Q$ the orthogonal projections onto the closure of
$P\cH+Q\cH$. Finally, we will use the common notation for the
commutator of two operators: $[A,B]:=AB-BA$.

Next we give two useful formulations of the existence of a proper F\o lner sequence.

\begin{proposition}\label{local-formulation}
Let $T\in\cL(\cH)$ with $\dim\cH=\infty$. Then
the following assertions are equivalent.

\begin{enumerate}
\item[(i)] $T$ has a proper F\o lner sequence.

\item[(ii)]
For each $\eps >0$ and each $n \in \N$ there exists a projection $P\in \cPfin(\cH)$
such that $\rank P\ge n $ and \enspace $\phi(T,P)< \eps$
(see \eqref{def-phi}).

\item[(iii)]
For each $Q \in \cPfin(\cH)$ and each $\eps >0$ there exists a projection $R\in \cPfin(\cH)$
such that $R\ge Q $ and $\phi(T,R)< \eps$.

\end{enumerate}

\end{proposition}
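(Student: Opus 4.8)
The plan is to establish the cyclic chain of implications (i)\,$\Rightarrow$\,(ii)\,$\Rightarrow$\,(iii)\,$\Rightarrow$\,(i). The implication (i)\,$\Rightarrow$\,(ii) is the easy one. If $\{P_n\}_{n\in\N}$ is a proper F\o lner sequence for $T$, then it is increasing with $P_n\to\1$ strongly; since $\dim\cH=\infty$, the ranks $\rank P_n$ cannot remain bounded (otherwise the strong limit would be a finite rank projection), and being an increasing sequence of integers they must satisfy $\rank P_n\to\infty$. As also $\phi(T,P_n)\to0$, given $\eps>0$ and $n\in\N$ it suffices to take $P=P_m$ for $m$ large enough that $\rank P_m\ge n$ and $\phi(T,P_m)<\eps$.

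The core of the argument is (ii)\,$\Rightarrow$\,(iii), and this is the step I expect to be the main (though not deep) obstacle: one must show that a prescribed finite rank projection $Q$ can be \emph{absorbed} into a high-rank, nearly-commuting projection at a cost negligible relative to the rank. Fix $Q\in\cPfin(\cH)$, put $k:=\rank Q$, and fix $\eps>0$. Choose $N\in\N$ with $2\|T\|\sqrt{k}/\sqrt{N}<\eps/2$, and use (ii) to pick $P\in\cPfin(\cH)$ with $\rank P\ge N$ and $\phi(T,P)<\eps/2$. Set $R:=P\vee Q$; then $R\ge Q$ and $R\ge P$, so $R-P$ is a positive contraction with $\rank(R-P)=\rank R-\rank P\le k$, whence $\|R-P\|_2^2=\Tr\big((R-P)^2\big)\le\Tr(R-P)\le k$. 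Writing $[T,R]=[T,P]+[T,R-P]$, using $\|[T,R-P]\|_2\le 2\|T\|\,\|R-P\|_2\le 2\|T\|\sqrt{k}$, and bounding $\|R\|_2=\sqrt{\rank R}\ge\|P\|_2$ and $\|R\|_2\ge\sqrt{N}$ in the respective terms, we get
\[
\phi(T,R)=\frac{\|[T,R]\|_2}{\|R\|_2}\le\frac{\|[T,P]\|_2}{\|P\|_2}+\frac{2\|T\|\sqrt{k}}{\sqrt{N}}=\phi(T,P)+\frac{2\|T\|\sqrt{k}}{\sqrt{N}}<\eps .
\]
Thus $R\in\cPfin(\cH)$, $R\ge Q$, and $\phi(T,R)<\eps$, which is (iii).

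For (iii)\,$\Rightarrow$\,(i) I would build the sequence inductively along an orthonormal basis. Fix an orthonormal basis $\{e_j\}_{j\in\N}$ of $\cH$ and let $E_j\in\cPfin(\cH)$ denote the orthogonal projection onto $\C e_j$. Apply (iii) with $Q=E_1$ and $\eps=1$ to get $R_1\in\cPfin(\cH)$ with $R_1\ge E_1$ and $\phi(T,R_1)<1$. Given $R_{n-1}$, apply (iii) with $Q=R_{n-1}\vee E_n$ and $\eps=1/n$ to obtain $R_n\in\cPfin(\cH)$ with $R_n\ge R_{n-1}\vee E_n$ and $\phi(T,R_n)<1/n$. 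Then $\{R_n\}_{n\in\N}$ is increasing, $\phi(T,R_n)\to0$, and $e_1,\dots,e_n\in R_n\cH$ for every $n$; hence $\bigcup_n R_n\cH$ is dense in $\cH$, and since the $R_n$ are increasing projections, $R_n\to\1$ strongly. Therefore $\{R_n\}_{n\in\N}$ is a proper F\o lner sequence for $T$, closing the cycle.
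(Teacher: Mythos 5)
Your proof is correct and follows essentially the same route as the paper's: (ii)\,$\Rightarrow$\,(iii) via $R=P\vee Q$ with the rank-$k$ perturbation controlled by taking $\rank P$ large, and (iii)\,$\Rightarrow$\,(i) by the same inductive absorption of an exhausting family of finite-rank projections. No gaps.
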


\begin{proof}

(i) $\implies$ (ii). This is by the definition of a proper F\o lner sequence.

(ii) $\implies$ (iii). Suppose (ii) holds. Let $Q\in \cPfin(\cH)$ and $\eps>0$ be given.
By (ii), there exists a projection $P\in \cPfin(\cH)$ such that $\phi(T,P)<\eps/2$ and
\[
\|T\| \cdot \|Q\|_2  \le \frac \eps 4 \,\|P\|_2.
\]
Put $R:=P\vee Q$ and note that $\|P\|_2\le \|R\|_2$, $R\ge Q$ and $R\ge P$.
We assert that $R$ has the desired properties.
First notice that
$\rank R - \rank P\le \rank Q$, which implies that $\|R-P\|_2\le \|Q\|_2$.
So we get
\begin{multline*}
\|[T,R]\|_2 \le
\|[T,P]\|_2 + \|[T,R-P]\|_2
\le
\|[T,P]\|_2 + 2 \|T\| \cdot \|R-P]\|_2 \\
\le
\phi(T,P)\|P\|_2 + 2 \|T\| \|Q\|_2
<
\frac \eps 2 \, \|R\|_2+\frac \eps 2 \, \|P\|_2 \le \eps  \|R\|_2 ,
\end{multline*}
which yields $\phi(T,R)< \eps$.

(iii) $\implies$ (i). Suppose (iii) holds. Choose any sequence $\{L_n\}$ of
non-zero finite dimensional orthogonal projections such that $L_1\le L_2\le \dots \le L_n \le \dots$ and
$s-\lim L_n=\1$. Then a proper F\o lner sequence $\{P_n\}$ for $T$ can be constructed inductively as follows.
Take $P_1=L_1\in \cPfin(\cH)$. If $P_1, \dots, P_n$ have been defined, use (iii) to choose
$P_{n+1}\in \cPfin(\cH)$ that satisfies
$P_{n+1}\ge P_n\vee L_n$ and $\phi(T, P_{n+1})\le \frac 1 {n+1}$.
Then $P_{n+1}\ge P_n$,
$P_{n+1}\ge L_n$ for any $n$ and $\phi(T, P_n)\to 0$ as $n\to \infty$, which
implies that $\{P_n\}$ is a proper F\o lner sequence.
\end{proof}

\begin{remark}\label{rem:projections}

\begin{itemize}

\item[(i)]
The preceding proposition also shows that given an operator $T$ and
a sequence of finite rank projections
$\{Q_n\}_n$ such that $\dim Q_n$ is unbounded and $\phi(T,Q_n)\to 0$ one can
construct a proper F\o lner sequence for $T$ in the sense of Definition~\ref{def:Foelner}.

\item[(ii)]
The equivalent formulations stated above are usual when dealing with asymptotic properties. Adapting from the context of
quasidiagonality (cf.~\cite[Section~7.2]{bBrown08}) one can also prove that $T$ has a proper F\o lner sequence if and only if
for each $\eps >0$ and each finite set $\cF\subset\cH$ there exists a $P\in \cPfin(\cH)$ such that
$\phi(T,P)< \eps$ and $\|Px-x\|<\eps$ for all $x\in\cF$.
\end{itemize}

\end{remark}

The preceding result immediately implies that the existence of a
proper F\o lner sequence for a direct sum can be localized in one of the
direct summands.

\begin{proposition}\label{localization}
Let $\cH$ and $\cH'$ be separable Hilbert spaces with $\dim\cH=\infty$.
If $T$ has a proper F\o lner sequence, then $T\oplus X\in\cL(\cH\oplus\cH')$ has a proper F\o lner sequence for {\em any}
$X\in\cL(\cH')$.
\end{proposition}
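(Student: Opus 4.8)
The plan is to reduce everything to the local characterization of proper F\o lner sequences in Proposition~\ref{local-formulation}, and in particular to its condition (ii). What makes this work is that condition (ii) says nothing about strong convergence to $\1$: it only asks for finite-rank projections of arbitrarily large rank on which the relative commutator $\phi(T,\cdot)$ is small. Such projections, living on $\cH$, can be transplanted to $\cH\oplus\cH'$ by simply adjoining a zero block on $\cH'$, so that the space $\cH'$ and the operator $X$ never enter the estimate at all.

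Concretely, I would fix $\eps>0$ and $n\in\N$. Since $\dim\cH=\infty$ and $T$ has a proper F\o lner sequence, Proposition~\ref{local-formulation}(ii) furnishes a projection $P\in\cPfin(\cH)$ with $\rank P\ge n$ and $\phi(T,P)<\eps$. Put $\wt P:=P\oplus 0\in\cPfin(\cH\oplus\cH')$. With respect to the decomposition $\cH\oplus\cH'$ one has
\[
 [T\oplus X,\wt P]=\begin{pmatrix}[T,P] & 0\\[0.3ex] 0 & 0\end{pmatrix},
\]
so that $\|[T\oplus X,\wt P]\|_2=\|[T,P]\|_2$ and $\|\wt P\|_2=\|P\|_2$; hence $\phi(T\oplus X,\wt P)=\phi(T,P)<\eps$, while $\rank\wt P=\rank P\ge n$. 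Thus $T\oplus X$ satisfies condition (ii) of Proposition~\ref{local-formulation}, now on the space $\cH\oplus\cH'$, which is again infinite-dimensional because $\dim\cH=\infty$. Applying Proposition~\ref{local-formulation} to $T\oplus X$ then produces a proper F\o lner sequence for $T\oplus X$, which is what we want. (Equivalently, by Remark~\ref{rem:projections}(i): if $\{P_n\}$ is a proper F\o lner sequence for $T$, the projections $P_n\oplus 0$ have unbounded rank and satisfy $\phi(T\oplus X,P_n\oplus 0)=\phi(T,P_n)\to 0$, which already suffices to build a genuine proper F\o lner sequence.)

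There is essentially no obstacle; the only subtlety worth flagging in the write-up is that the embedded projections $P_n\oplus 0$ do \emph{not} themselves converge strongly to the identity of $\cH\oplus\cH'$, which is precisely the reason to work through the rank/commutator reformulation of Proposition~\ref{local-formulation} rather than through Definition~\ref{def:Foelner} directly. The hypothesis $\dim\cH=\infty$ is used twice — to invoke Proposition~\ref{local-formulation} for $T$ and then again for $T\oplus X$ — while no assumption on $X$ or on whether $\cH'$ is finite- or infinite-dimensional is needed.
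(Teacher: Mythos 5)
Your proof is correct and follows essentially the same route as the paper: both pass to the rank/commutator characterization of Proposition~\ref{local-formulation}, transplant the projection as $P\oplus 0$, and observe that $\phi(T\oplus X,P\oplus 0)=\phi(T,P)$. Your explicit computation of the commutator block and the remark that $P_n\oplus 0$ need not converge strongly to $\1$ are welcome clarifications, but the argument is the same.
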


\begin{proof}
Assume that  $T$ has a proper F\o lner sequence and $X$ is any operator in on $\cH'$. Then for each $\eps >0$ and each $n \in \N$
there exists a $P\in \cPfin(\cH)$
such that $\rank P\ge n $ and \enspace $\phi(T,P)< \eps$. Then
$\phi(T\oplus X,P\oplus 0)=\phi(T,P)< \eps$, which shows that
$P\oplus 0$ satisfies
the properties required in Proposition~\ref{local-formulation}~(iii)
(with respect to $T\oplus X$, instead of $T$).
\end{proof}

The following proposition concerning tensor products of operators
follows from Proposition~2.13 in
\cite{Bedos95}. For convenience of the reader we give an elementary proof of it.

\begin{proposition}
\label{tens-prods}
If $\cH$, $\cK$ are Hilbert spaces and $A\in \cL(\cH)$ and $B\in \cL(\cK)$  are linear operators that have proper F\o lner sequences, then
the tensor product $A\otimes B\in \cL(\cH\otimes \cK)$ also has a  proper F\o lner sequence.
More precisely, if $\{P_n\}_n$ is a proper F\o lner sequence for $A$ and $\{Q_n\}_n$ is a proper F\o lner sequence for $B$, then
$\{P_n \otimes Q_n\}_n$ is a proper F\o lner sequence for $A\otimes B$.
\end{proposition}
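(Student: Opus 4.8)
The plan is to establish the key norm identity for tensor products of Hilbert–Schmidt operators and the product of projection norms, and then simply substitute into the definition of $\phi$. First I would recall that for any bounded operators $C$ on $\cH$ and $D$ on $\cK$ with $C\otimes D$ Hilbert–Schmidt, one has $\|C\otimes D\|_2 = \|C\|_2\,\|D\|_2$, and that for finite rank projections $\|P_n\otimes Q_n\|_2^2 = \rank(P_n\otimes Q_n) = (\rank P_n)(\rank Q_n) = \|P_n\|_2^2\,\|Q_n\|_2^2$, so $\|P_n\otimes Q_n\|_2 = \|P_n\|_2\,\|Q_n\|_2$. Moreover $P_n\otimes Q_n$ is an increasing sequence of finite rank orthogonal projections converging strongly to $\1_{\cH\otimes\cK}$ since $\{P_n\}$ and $\{Q_n\}$ are, so the only thing to check is the F\o lner ratio condition \eqref{eq:F1}.

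Next I would write out the commutator. With $A\otimes B$ acting on $\cH\otimes\cK$ we have
\[
(A\otimes B)(P_n\otimes Q_n) - (P_n\otimes Q_n)(A\otimes B)
= (AP_n)\otimes (BQ_n) - (P_nA)\otimes(Q_nB).
\]
The trick is to add and subtract a mixed term, for instance $(AP_n)\otimes(Q_nB)$, to get
\[
(A\otimes B)(P_n\otimes Q_n) - (P_n\otimes Q_n)(A\otimes B)
= [A,P_n]\otimes(Q_nB) + (AP_n)\otimes[B,Q_n].
\]
Taking Hilbert–Schmidt norms, using the multiplicativity of $\|\cdot\|_2$ on elementary tensors, the triangle inequality, and the bounds $\|Q_nB\|_2\le \|B\|\,\|Q_n\|_2$ and $\|AP_n\|_2\le\|A\|\,\|P_n\|_2$, I obtain
\[
\big\|[A\otimes B,\;P_n\otimes Q_n]\big\|_2
\le \|[A,P_n]\|_2\,\|B\|\,\|Q_n\|_2 + \|A\|\,\|P_n\|_2\,\|[B,Q_n]\|_2.
\]
Dividing by $\|P_n\otimes Q_n\|_2 = \|P_n\|_2\,\|Q_n\|_2$ gives
\[
\phi(A\otimes B,\;P_n\otimes Q_n)
\le \|B\|\,\phi(A,P_n) + \|A\|\,\phi(B,Q_n).
\]

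Finally, since $\phi(A,P_n)\to 0$ and $\phi(B,Q_n)\to 0$ by hypothesis, the right-hand side tends to $0$, so $\{P_n\otimes Q_n\}_n$ satisfies \eqref{eq:F1} for $A\otimes B$; together with the observations above that it is increasing and strongly convergent to $\1$, this shows it is a proper F\o lner sequence for $A\otimes B$. There is no serious obstacle here; the only mildly delicate point is being careful that the add-and-subtract decomposition splits cleanly into elementary tensors so that $\|\cdot\|_2$-multiplicativity applies, and recalling the (elementary) facts $\|C\otimes D\|_2=\|C\|_2\|D\|_2$ and $\|XP\|_2\le\|X\|\,\|P\|_2$.
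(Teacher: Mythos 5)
Your proof is correct and follows essentially the same route as the paper: the same add-and-subtract decomposition of the commutator into elementary tensors (differing only in which mixed term is inserted), the same multiplicativity of $\|\cdot\|_2$ on elementary tensors, and the same final bound $\phi(A\otimes B, P_n\otimes Q_n)\le \|B\|\,\phi(A,P_n)+\|A\|\,\phi(B,Q_n)$.
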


\begin{proof}
Suppose that $\{P_n\}_n$ and $\{Q_n\}_n$ are as above. Then $\{P_n \otimes Q_n\}_n$ is an increasing sequence
strongly converging to the identity. Moreover we have
\begin{align*}
(A\otimes B)(P_n\otimes Q_n)
 -
(P_n\otimes Q_n)(A\otimes B) & =
(AP_n)\otimes (BQ_n) -
(P_nA)\otimes (Q_nB) \\
& =
[A, P_n] \otimes (BQ_n) +
(P_nA)\otimes [B, Q_n].
\end{align*}
This equality and the property $\|C\otimes D\|_2=\|C\|_2\|D\|_2$ imply that
\begin{align*}
\phi\big(A\otimes B, P_n\otimes Q_n\big)
& =
\frac
{\|[A\otimes B, P_n\otimes Q_n]\|_2}
{\|P_n\|_2\, \|Q_n\|_2}
\\
& \le
\frac {\|[A, P_n]\|_2} {\|P_n\|_2 } \cdot \|B\| +
\|A\| \cdot \frac {\|[B, Q_n]\|_2} {\|Q_n\|_2} \to 0
\qquad \text{as} \enspace n\to\infty.                        \qedhere
\end{align*}
\end{proof}

The existence of a F\o lner sequence for a unital C*-algebra
has important structural consequences.
For the next result we need to recall the following notion: a state $\tau$ on
the unital C*-algebra $\cA\subset\cL(\cH)$
(i.e., a positive and normalized linear functional on $\cA$) is called an
\textit{amenable trace}
if there exists a state $\psi$ on $\cL(\cH)$ such that
$\psi\upharpoonright\cA=\tau$ and
\begin{equation*}
\psi(X A) = \psi(A X)\;,\quad X\in \cL(\cH)\;,\;A\in\cA\,.
\end{equation*}
Note that the previous equation already implies that $\tau$ is a trace
on $\cA$. The state $\psi$ is also referred in the literature as a
hypertrace for $\cA$. Hypertraces are the algebraic analogue of the invariant mean mentioned
at the beginning of the Introduction.
Later we will need the following standard result.
(See \cite{Connes76,ConnesIn76} for
the original statement and more results in the context of operator
algebras; see also \cite{Bedos95,AraLledo12} for additional results in the
context of C*-algebras related to the existence of a hypertrace.)
\begin{proposition}\label{pro:hypertrace}
  Let $\cA\subset\cL(\cH)$ be a separable unital C*-algebra. Then
 $\cA$ has a F\o lner sequence if and only if $\cA$ has an amenable trace.
\end{proposition}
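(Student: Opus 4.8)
The plan is to prove both implications by connecting the F\o lner condition to the Hilbert--Schmidt (resp.\ normalized trace) structure on the finite-rank corners, and to use a weak-$*$ compactness argument to extract the hypertrace.

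\medskip

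\textbf{From a F\o lner sequence to an amenable trace.}
First I would suppose $\cA$ has a F\o lner sequence $\{P_n\}$ and, for each $n$, define the normalized functional
\[
\tau_n(X):=\frac{\Tr(P_n X P_n)}{\Tr(P_n)}\;,\qquad X\in\cL(\cH)\;,
\]
which is a state on $\cL(\cH)$. Since the state space of $\cL(\cH)$ is weak-$*$ compact, pass to a subnet (or use a Banach limit along $n$) to obtain a state $\psi$ on $\cL(\cH)$ with $\psi(X)=\lim_n \tau_n(X)$ along the net. The key estimate is that for every $A\in\cA$ and $X\in\cL(\cH)$,
\[
|\tau_n(XA)-\tau_n(AX)|=\frac{|\Tr(P_n[X,A]P_n)|}{\Tr(P_n)}
=\frac{|\Tr\big([P_n,X]\,[A,P_n]\big)|}{\|P_n\|_2^2}+o(1)\;,
\]
where the rewriting uses $\Tr(P_n X A P_n)-\Tr(P_n A X P_n)$ together with $P_n^2=P_n$ and cyclicity of the trace; by Cauchy--Schwarz for the Hilbert--Schmidt inner product this is bounded by
\[
\frac{\|[P_n,X]\|_2\,\|[A,P_n]\|_2}{\|P_n\|_2^2}
\le \|X\|\cdot\frac{\|[A,P_n]\|_2}{\|P_n\|_2}\cdot\frac{\|[P_n,X]\|_2}{\|P_n\|_2}\cdot\frac{1}{\|P_n\|_2}\cdot\|P_n\|_2\;,
\]
and here $\phi(A,P_n)\to0$ forces this to $0$ (one factor $\|[P_n,X]\|_2/\|P_n\|_2$ is harmless because $\|[P_n,X]\|_2\le 2\|X\|\|P_n\|_2$). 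Hence $\psi(XA)=\psi(AX)$, so $\tau:=\psi\restriction\cA$ is an amenable trace. The main care-point here is handling the case $X=\1$ and the fact that only $A\in\cA$ (not both arguments) is assumed to be $P_n$-almost commuting; the Cauchy--Schwarz split above is exactly what makes a single F\o lner estimate suffice.

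\medskip

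\textbf{From an amenable trace to a F\o lner sequence.}
Conversely, suppose $\psi$ is a hypertrace for $\cA$ with $\tau=\psi\restriction\cA$. This is the harder direction and I expect the main obstacle to be producing genuine finite-rank projections (not merely finite-rank positive contractions or almost-projections) from an abstract state. I would proceed in two stages. First, exploit separability of $\cA$ and of $\cL(\cH)$ in the strong/weak topology to realize $\psi$ as a weak-$*$ limit of vector states or, better, of normalized-trace states on finite-rank subspaces: using that $\cL(\cH)$ is the dual of the trace class and that $\psi$ is a weak-$*$ limit point of states of the form $X\mapsto\Tr(\rho X)$ with $\rho$ a finite-rank density operator. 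Concretely, since the normal states are weak-$*$ dense in the state space, choose density operators $\rho_n$ with $\Tr(\rho_n X)\to\psi(X)$ for $X$ ranging over a fixed countable dense set of generators of $\cA$ together with a countable dense set of $\cL(\cH)$. The approximate traciality $\Tr(\rho_n[A,X])\to0$ then has to be upgraded to an honest F\o lner estimate. Second, I would invoke the standard ``Powers--St\o rmer'' type inequality: if $\rho$ is a positive trace-class operator then for any bounded $A$,
\[
\big\|\,\rho^{1/2}A-A\rho^{1/2}\,\big\|_2^2 \;\le\; \big\|\,\rho A-A\rho\,\big\|_1\,\|A\|\;,
\]
so approximate commutation of $\rho_n$ in trace-norm yields approximate commutation of $\rho_n^{1/2}$ in Hilbert--Schmidt norm; then a spectral-cutoff argument on $\rho_n^{1/2}$ (splitting its spectrum into dyadic bands $[2^{-k-1},2^{-k})$ and using a pigeonhole/averaging argument over the bands, as in Connes' proof) produces a spectral projection $P_n$ of $\rho_n$ with $\|[A,P_n]\|_2/\|P_n\|_2$ small. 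This is precisely the F\o lner sequence. One must also arrange $\rank P_n\to\infty$ and, if a \emph{proper} F\o lner sequence were wanted, strong convergence to $\1$ — but the statement only asks for a F\o lner sequence, so by Proposition~\ref{local-formulation}-type reasoning it suffices to get $\phi(A,P_n)\to0$ with $\rank P_n\to\infty$ for all $A$ in a countable generating set, which by Proposition~\ref{pro-1}(i) gives a F\o lner sequence for all of $\cA$.

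\medskip

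\textbf{Remark on references.} In practice I would attribute the backward direction to Connes~\cite{Connes76,ConnesIn76} and its C*-algebraic formulation to \cite{Bedos95}, and simply cite those for the spectral-projection extraction rather than reprove it; the forward direction is the short Cauchy--Schwarz computation above. The genuinely delicate point, and the one I would write out in full, is the passage from ``$\rho_n$ almost commutes in $\|\cdot\|_1$'' to ``a spectral projection of $\rho_n$ almost commutes in $\|\cdot\|_2$, normalized by its own Hilbert--Schmidt norm'', since the normalization by $\|P_n\|_2=(\rank P_n)^{1/2}$ rather than by $\|\rho_n^{1/2}\|_2$ is what distinguishes the F\o lner condition from mere asymptotic abelianness.
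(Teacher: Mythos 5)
The paper itself does not prove Proposition~\ref{pro:hypertrace}; it is stated as a standard result with references to \cite{Connes76,ConnesIn76,Bedos95}, and your proposal reconstructs exactly the argument those references give, so in that sense you are on the paper's intended route and the overall strategy is correct. Two points deserve tightening. First, in the forward direction your displayed identity and the subsequent chain of factors are garbled: the clean computation is $\Tr(P_n[X,A]P_n)=\Tr([X,A]P_n)=\Tr\big(X[A,P_n]\big)$ (using $P_n^2=P_n$ and cyclicity), and then writing $[A,P_n]=[A,P_n]P_n+P_n[A,P_n]$ and applying Cauchy--Schwarz in the Hilbert--Schmidt inner product to each term gives $\big|\Tr(X[A,P_n])\big|\le 2\|X\|\,\|[A,P_n]\|_2\,\|P_n\|_2$, so that dividing by $\Tr(P_n)=\|P_n\|_2^2$ yields $2\|X\|\,\phi(A,P_n)\to 0$; your conclusion is right but the intermediate equality as written is not. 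Second, and more substantively, in the backward direction there is a gap between ``$\Tr(\rho_n[A,X])\to 0$ for every $X$'' and the hypothesis ``$\|[A,\rho_n]\|_1\to 0$'' that Powers--St\o rmer needs: the former only says $[\rho_n,A]\to 0$ in the weak topology $\sigma(\cL^1,\cL^\infty)$ of the trace class. The standard fix (Day's trick, as in Connes) is to observe that the density operators form a convex set on which $\rho\mapsto [\rho,A]$ is affine, and to apply Mazur's theorem to pass to convex combinations that converge in trace norm, simultaneously for a finite (hence, by a diagonal argument using separability of $\cA$, a countable dense) set of $A$'s. With that step inserted, the Powers--St\o rmer inequality and the dyadic spectral-cutoff argument you describe do produce the required projections, and since Definition~\ref{def:Foelner} asks for neither increasing rank nor strong convergence to $\1$, nothing further is needed.
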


In general, it is not true that if $\cA$ has an amenable trace, then it must also
have a {\em proper} F\o lner sequence.

Finally we recall the following definition from the Introduction.
\begin{definition}
$T\in\cL(\cH)$ is called a finite operator if
\[
 0\in\overline{W\left([T,X]\right)}\quad\mathrm{for~all}\quad X\in\cL(\cH)\;,
\]
where $W(T)$ denotes the numerical range of the operator $T$, i.e.,
\[
W(T)=\{\langle Tx,x\rangle\mid x\in\cH\;\;,\;\;\|x\|=1 \}\;,
\]
and where the bar means the closure of the corresponding subset in $\C$.
\end{definition}

In this context the following class of operators plays a distinguished role:

\begin{definition}\label{def:finite-block}
Let $T\in \cL(\cH)$. We say that $T$ is
finite block reducible if $T$ has a non-trivial finite-dimensional reducing
subspace, i.e., there is an orthogonal decomposition $\cH=\cH_0\oplus \cH_1$,
which reduces $T$, where $\cH_0$ is finite
dimensional and non-zero.
\end{definition}

We collect in the following theorem some standard results due to Williams about the class of finite operators
(cf.~\cite{Williams70}).

\begin{vartheorem}[Williams]
An operator $T\in\cL(\cH)$ is finite if and only if
$C^*(T,\1)$ has an amenable trace. The class of finite operators
is closed in the operator norm and contains all finite block reducible
operators.
\end{vartheorem}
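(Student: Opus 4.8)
The plan is to establish the three assertions of the Williams theorem in turn, leaning on Proposition~\ref{pro:hypertrace} for the equivalence and on elementary Hilbert-space arguments for the rest. \emph{First}, for the equivalence ``$T$ is finite $\iff$ $C^*(T,\1)$ has an amenable trace'', I would argue as follows. If $C^*(T,\1)$ has an amenable trace $\tau$ with hypertrace $\psi$ on $\cL(\cH)$, then for every $X\in\cL(\cH)$ we have $\psi([T,X])=\psi(TX)-\psi(XT)=0$; since $\psi$ is a state on $\cL(\cH)$, Hausdorff--Toeplitz-type reasoning gives $\psi([T,X])\in\overline{W([T,X])}$, so $0\in\overline{W([T,X])}$, i.e.\ $T$ is finite. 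For the converse, suppose $T$ is finite. The key point is to produce a state $\psi$ on $\cL(\cH)$ that annihilates all commutators $[T,X]$; then its restriction to $C^*(T,\1)$ is an amenable trace. I would consider the norm-closed real-linear subspace $\cM\subset\cL(\cH)$ spanned by $\1$ and all self-adjoint operators of the form $[T,X]+[T,X]^*$ and $i([T,X]-[T,X]^*)$, define a functional $f$ on $\cM$ by $f(\alpha\1 + C)=\alpha$ for $C$ a limit of such commutator combinations, and check that $\|f\|\le 1$: this is exactly where the finiteness hypothesis enters, since $0\in\overline{W([T,X])}$ forces $\|\,\1 - \lambda[T,X]\,\|\ge 1$ for all scalars $\lambda$, and more generally controls $\|\1 + C\|$ from below by $1$. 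Then Hahn--Banach extends $f$ to a norm-one functional on $\cL(\cH)$ with $f(\1)=1$, hence a state, which annihilates every commutator $[T,X]$ by construction.

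\emph{Second}, for the closedness of the class of finite operators in the operator norm: this follows quickly from the fact that $T\mapsto\overline{W([T,X])}$ varies continuously in a suitable sense. More precisely, if $T_k\to T$ in norm and each $T_k$ is finite, then for fixed $X$ we have $[T_k,X]\to[T,X]$ in norm, so $\dist(0,W([T,X]))\le \|[T,X]-[T_k,X]\|+\dist(0,W([T_k,X]))=\|[T-T_k,X]\|\to 0$, whence $0\in\overline{W([T,X])}$; as $X$ was arbitrary, $T$ is finite. (Alternatively, one could invoke the equivalence just proved together with a compactness argument for amenable traces, but the direct estimate is cleaner.)

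\emph{Third}, for the statement that every finite block reducible operator is finite: suppose $\cH=\cH_0\oplus\cH_1$ reduces $T$ with $0<\dim\cH_0=d<\infty$. Write $T=T_0\oplus T_1$ accordingly, and let $P_0$ be the (finite-rank) orthogonal projection onto $\cH_0$. For any $X\in\cL(\cH)$, consider the compression $P_0XP_0|_{\cH_0}$, a $d\times d$ matrix; the normalized trace $\tau_0(Y):=\tfrac1d\Tr(P_0 Y P_0|_{\cH_0})$ is a state on $\cL(\cH)$, and since $T$ commutes with $P_0$ one computes $\tau_0([T,X])=\tfrac1d\Tr\big([T_0,\,P_0XP_0|_{\cH_0}]\big)=0$ because the trace of a commutator of finite matrices vanishes. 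Thus $0=\tau_0([T,X])\in\overline{W([T,X])}$ for all $X$, so $T$ is finite. (Equivalently: $\tau_0$ restricted to $C^*(T,\1)$ is an amenable trace, so Proposition~\ref{pro:hypertrace}-type reasoning applies, but the direct numerical-range argument suffices.)

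I expect the main obstacle to be the nontrivial direction of the equivalence, namely extracting a genuine state on $\cL(\cH)$ from the purely numerical-range condition defining finiteness. The delicate verification is that the functional $\alpha\1+C\mapsto\alpha$ has norm at most $1$ on the real span of $\1$ together with the self-adjoint parts of commutators $[T,X]$; one must show $\|\alpha\1+C\|\ge|\alpha|$ for every such $C$, which amounts to a limiting argument combining the condition $0\in\overline{W([T,X])}$ for individual $X$ with the fact that finite sums of commutators $[T,X_1]+\dots+[T,X_m]=[T,X_1+\dots+X_m]$ remain commutators, so that the real span in question is actually spanned by $\1$ and a single parametrized family. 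The remaining two parts are routine norm estimates and finite-matrix trace identities.
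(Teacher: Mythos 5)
The paper offers no proof of this theorem: it is quoted as a known result and referred to Williams' article \cite{Williams70}. Your proposal is correct, and it essentially reconstructs Williams' original arguments — the Hahn--Banach production of a state annihilating all commutators $[T,X]$ for the hard direction, the perturbation estimate on numerical ranges for closedness, and the normalized trace over a finite-dimensional reducing block for the last assertion. The key inequality you isolate does hold: every element of your real span is of the form $[T,X]+[T,X]^*$ for a single $X$, its numerical range is $2\,\Re W([T,X])$, so $0$ lies in the closed convex hull of the spectrum of this self-adjoint operator and $\|\alpha\1+C\|\ge|\alpha|$ follows. Two routine steps should be made explicit. First, in the easy direction you use that $\psi([T,X])\in\overline{W([T,X])}$ for an arbitrary state $\psi$ on $\cL(\cH)$; this is the coincidence of the algebra numerical range with $\overline{W(\cdot)}$, equivalently the weak-$*$ density of convex combinations of vector states, and deserves a citation or a line of proof. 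Second, a state with $\psi(TX)=\psi(XT)$ for all $X\in\cL(\cH)$ must still be checked to satisfy $\psi(AX)=\psi(XA)$ for every $A\in C^*(T,\1)$ — one commutes a word in $T,T^*$ past $X$ one letter at a time and passes to norm limits — since the paper's definition of amenable trace requires the trace identity for the whole algebra, not just for the generator. With these additions your argument is complete.
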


It follows that the closure of the set of all
finite block reducible operators is contained in the class of finite operators.

Combining Williams' Theorem with Proposition~\ref{pro:hypertrace}, we get the following fact.

%
%
\begin{corollary}
\label{cor:fin=Fol_seq}
For any operator $T\in \cL(H)$, the following properties are equivalent:
\margp{Si lo escribe Williams, habr\'a que cambiarlo}

\begin{itemize}
\item[(i)] $T$ is finite;

\item[(ii)] $T$ has a F\o lner sequence;

\item[(iii)] $C^*(T, \1)$ has an amenable trace.
\end{itemize}
\end{corollary}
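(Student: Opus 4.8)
The plan is to obtain all three equivalences by applying Williams' Theorem and Proposition~\ref{pro:hypertrace} to the unital C*-algebra $\cA:=C^*(T,\1)$, which is separable because it is singly generated. Williams' Theorem states verbatim that $T$ is finite if and only if $C^*(T,\1)$ has an amenable trace, so (i) $\Leftrightarrow$ (iii) is immediate. It then remains only to connect (iii) with the existence of a F\o lner sequence for the single operator $T$, i.e.\ to prove (ii) $\Leftrightarrow$ (iii).

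The one technical point is that Proposition~\ref{pro-1}(i), which would let us pass from a F\o lner sequence for $\{T\}$ to one for $C^*(T,\1)$, is stated only for sequences converging strongly to $\1$, a hypothesis not available a priori in (ii). To bypass this I would first record the elementary fact that, for any fixed sequence $\{P_n\}_{n\in\N}\subset\cPfin(\cH)$, the set
\[
  \cB:=\bigl\{\,S\in\cL(\cH)\,:\,\phi(S,P_n)\to 0 \text{ as } n\to\infty\,\bigr\}
\]
is a norm-closed unital $*$-subalgebra of $\cL(\cH)$. Indeed, $\phi(\cdot,P_n)$ is a seminorm; the identity $[SR,P_n]=S[R,P_n]+[S,P_n]R$ gives the Leibniz bound $\phi(SR,P_n)\le\|S\|\,\phi(R,P_n)+\|R\|\,\phi(S,P_n)$; the identity $[S^*,P_n]=-[S,P_n]^*$ together with the $*$-invariance of $\|\cdot\|_2$ gives $\phi(S^*,P_n)=\phi(S,P_n)$; norm-closedness follows from $\phi(S,P_n)\le 2\|S-S_k\|+\phi(S_k,P_n)$ by taking $\limsup_n$ and then letting $k\to\infty$; and $\1\in\cB$ because $[\1,P_n]=0$.

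With this in hand the argument closes quickly. If (ii) holds, pick a F\o lner sequence $\{P_n\}$ for $T$; then $T\in\cB$, hence $\cB\supseteq C^*(T,\1)$, so $\{P_n\}$ is a F\o lner sequence for $\cA$, and Proposition~\ref{pro:hypertrace} produces an amenable trace on $\cA$, which is (iii). Conversely, if (iii) holds, Proposition~\ref{pro:hypertrace} yields a F\o lner sequence $\{P_n\}$ for $\cA$, and since $T\in\cA$ the defining condition \eqref{eq:F1} holds in particular for $T$, so $\{P_n\}$ is a F\o lner sequence for $T$, which is (ii). Combined with (i) $\Leftrightarrow$ (iii) from Williams' Theorem this gives the full cycle. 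There is essentially no obstacle here: the only care needed is the small observation that ``$\phi$-vanishing along a fixed sequence of projections'' is preserved under products, adjoints and norm limits, which upgrades a F\o lner sequence for $T$ to one for $C^*(T,\1)$ with no convergence assumption on the projections.
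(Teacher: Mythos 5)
Your proof is correct and follows the same route the paper intends: Williams' Theorem gives (i)$\Leftrightarrow$(iii), Proposition~\ref{pro:hypertrace} (applied to the separable algebra $C^*(T,\1)$) gives (iii)$\Leftrightarrow$``$C^*(T,\1)$ has a F\o lner sequence'', and the identification of the latter with (ii) is the fact the paper treats as obvious in Proposition~\ref{pro-1}(i). Your explicit verification that $\{S:\phi(S,P_n)\to 0\}$ is a norm-closed unital $*$-subalgebra is a correct and careful filling-in of that step, and you are right that the strong-convergence hypothesis in Proposition~\ref{pro-1} plays no role in this part of the argument.
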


\begin{remark}\label{rem:subtle}
Note that in the reverse implication of Proposition~\ref{pro:hypertrace} the sequence of projections
does not have to be a proper F\o lner sequence in the sense of Definition~\ref{def:Foelner}.
In fact, one can easily construct the following counterexample: consider
a finite block reducible operator $T= T_0\oplus T_1$
on the Hilbert space $\cH=\cH_0\oplus\cH_1$, with $\mathrm{dim}H_0<\infty$ and
where $T_1$ has no F\o lner sequence (examples of these type of operators
are given in Section~\ref{sec:examples}). Then, by
Williams theorem it follows that $C^*(T\,,\,\1)$ has a
hypertrace and by Proposition~\ref{pro:hypertrace} it has
a F\o lner sequence also. The simplest choice of F\o lner sequence is the
constant sequence $P_n=\1_{\cH_0}\oplus 0$ which trivially
satisfies (\ref{eq:F1}) for $T$. But $T$ cannot have a proper F\o lner sequence
because $T_1$ has no F\o lner sequence
(see Proposition~\ref{prop:decomp} below).


\end{remark}

\section{Strongly non-F\o lner operators}\label{sec:NF}

In the present section we begin the analysis of operators with no F\o lner sequence.

\begin{definition}\label{def:strongNfol}
Let $\cH$ be an infinite dimensional Hilbert space
and $T$ an operator on $\cH$. We will say that $T$  is {\em strongly non-F\o lner}
if there exists an $\eps >0$ such that all projections $P\in\cPfin(\cH)$ satisfy
\[
 \phi(T, P)\ge \eps\;.
\]
\end{definition}

\begin{theorem}
\label{thm:str-foln}
Let $T\in\cL(\cH)$ with $\dim \cH=\infty$. Then $T$
has no proper F\o lner sequence if and only if $T$ has an orthogonal sum
representation $T=A\oplus \wt T$ on $\cH=\cH_0\oplus\HH$, where $\dim \cH_0 < \infty$
(so that  $A$ is a finite dimensional operator) and $\wt T$ is strongly non-F\o lner.
\end{theorem}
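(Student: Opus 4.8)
The plan is to prove the two implications separately, relying on the characterisations of proper F\o lner sequences in Proposition~\ref{local-formulation}, on Remark~\ref{rem:projections}(i), and on the localisation principle of Proposition~\ref{localization}.

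For the implication ``$\Leftarrow$'' I would assume $T=A\oplus\wt T$ with $d:=\dim\cH_0<\infty$ and $\wt T$ strongly non-F\o lner with constant $\eps>0$, and suppose for contradiction that $\{P_n\}$ is a proper F\o lner sequence for $T$. Since $P_n\uparrow\1$ strongly and $\dim\cH=\infty$ we have $\rank P_n\to\infty$. The single idea needed is to \emph{compress} $P_n$ onto $\HH$: put $V_n:=P_n\cH\cap\HH$, so $\dim V_n\ge\rank P_n-d$, and let $R_n$ be the orthogonal projection onto $V_n$; then $R_n\le P_n$, $R_n\in\cPfin(\HH)$, and since $T$ respects the decomposition $\cH_0\oplus\HH$ a direct computation gives $[T,R_n]=0\oplus[\wt T,R_n]$, whence $\|[T,R_n]\|_2\ge\eps\sqrt{\rank P_n-d}$ by the strong non-F\o lner property of $\wt T$. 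As $P_n-R_n$ is a projection of rank $\le d$, the triangle inequality together with $\|[T,P_n-R_n]\|_2\le 2\|T\|\sqrt d$ yields
\[
\phi(T,P_n)\ \ge\ \frac{\eps\sqrt{\rank P_n-d}-2\|T\|\sqrt d}{\sqrt{\rank P_n}}\ \longrightarrow\ \eps>0 ,
\]
contradicting $\phi(T,P_n)\to 0$.

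For ``$\Rightarrow$'' assume $T$ has no proper F\o lner sequence. I would consider the family $\mathfrak M$ of all finite dimensional reducing subspaces of $T$ (including $\{0\}$); it is directed by inclusion because the sum of two reducing subspaces reduces $T$, so $\cH_\infty:=\overline{\bigcup_{\cK\in\mathfrak M}\cK}$ is a reducing subspace. The first step is to show $\dim\cH_\infty<\infty$: otherwise, by separability, one can choose an increasing sequence $\cK_1\subset\cK_2\subset\cdots$ in $\mathfrak M$ with dense union in $\cH_\infty$, and the projections onto the $\cK_n$ have zero commutator with $T$, hence form a proper F\o lner sequence for $T|_{\cH_\infty}$; Proposition~\ref{localization} would then produce a proper F\o lner sequence for $T=T|_{\cH_\infty}\oplus T|_{\cH_\infty^\perp}$, a contradiction. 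So I set $\cH_0:=\cH_\infty$, $\HH:=\cH_0^\perp$ (which is infinite dimensional), $A:=T|_{\cH_0}$, $\wt T:=T|_{\HH}$, and note that by maximality $\wt T$ has no non-trivial finite dimensional reducing subspace. It then remains to prove $\wt T$ is strongly non-F\o lner. Assume not; there are $P_k\in\cPfin(\HH)$ with $\phi(\wt T,P_k)\to 0$. If $\{\rank P_k\}$ is unbounded, Remark~\ref{rem:projections}(i) gives a proper F\o lner sequence for $\wt T$ and Proposition~\ref{localization} one for $T$, a contradiction; so $\{\rank P_k\}$ is bounded, and after passing to a subsequence $\rank P_k=m$ for all $k$.

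The step I expect to be the main obstacle is this bounded--rank case. From $\|[\wt T,P_k]\|_2=\sqrt m\,\phi(\wt T,P_k)\to 0$ I would deduce $\|[\wt T,P_k]\|\to 0$, pass to a WOT--convergent subsequence $P_k\to P$ (the unit ball of $\cL(\HH)$ is WOT--compact and, $\HH$ being separable, sequentially so), and observe that $0\le P\le\1$, $\Tr P\le m$ by Fatou, and $[\wt T,P]=0$ by uniqueness of WOT limits; since then every spectral projection $\1_{[\delta,1]}(P)$ is a finite rank reducing projection for $\wt T$, the absence of a non-trivial such projection forces $P=0$, i.e.\ $P_k\to 0$ weakly. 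Using that each partial span of the $P_k$ is of finite rank and $P_k\to 0$ weakly, I would pass to a further subsequence and perform an arbitrarily small norm perturbation to obtain rank--$m$ projections $P_k'$ with \emph{pairwise orthogonal} ranges and still $\|[\wt T,P_k']\|_2\to 0$; then for $Q_N:=P_1'+\dots+P_N'$ a block--by--block Hilbert--Schmidt estimate gives
\[
\|[\wt T,Q_N]\|_2\ \le\ \sqrt{2N}\,\max_{i\le N}\|[\wt T,P_i']\|_2 ,\qquad\text{so}\qquad \phi(\wt T,Q_N)\ \le\ \sqrt{2/m}\,\max_{i\le N}\|[\wt T,P_i']\|_2 ,
\]
with $\rank Q_N=Nm$. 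Hence, choosing $N$ large and a suitable tail window of the $P_i'$, for every $\eps>0$ and every $n$ one finds a projection $Q$ with $\rank Q\ge n$ and $\phi(\wt T,Q)<\eps$; by Proposition~\ref{local-formulation}(ii) $\wt T$ has a proper F\o lner sequence, and Proposition~\ref{localization} gives one for $T$ --- the final contradiction. The genuinely delicate points are the simultaneous orthogonalisation of the $P_k$ with control of the commutator in Hilbert--Schmidt norm and the verification of the block estimate; the rest is bookkeeping with the results of Section~\ref{sec:foelner}.
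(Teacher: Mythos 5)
Your argument is correct and follows the same overall architecture as the paper's proof: split off the unique maximal finite-dimensional reducing subspace, note that the complementary summand $\wt T$ has no non-trivial finite-dimensional reducing subspace (equivalently $\phi(\wt T,P)\neq 0$ for all $P$), and then show that a Følner sequence of bounded rank for $\wt T$ can be upgraded to a proper one --- which is exactly the content of the paper's Proposition~\ref{prop:splitting}. Where you genuinely differ is in the implementation of that last step. The paper factors $P_n=f_nf_n^*$ through isometries $f_n\colon\C^m\to\cH$, shows that a non-zero WOT limit of the $f_n$ would produce a commuting finite-rank projection, and then assembles $N$ \emph{almost}-orthogonal projections via Lemma~\ref{lem:3s^2} (giving $\sum_jP_j\ge\tfrac12\bigvee_jP_j$) and operator-norm estimates with $\delta\sim N^{-3}$. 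You instead take the WOT limit $P$ of the projections themselves, kill it using spectral projections of the positive trace-class operator $P$ commuting with $\wt T$, and then \emph{exactly} orthogonalise a subsequence by small norm perturbations; this buys the clean quadrature bound $\|[\wt T,Q_N]\|_2\le\sqrt{2N}\max_i\|[\wt T,P_i']\|_2$, at the price of the standard projection-perturbation lemma (if $\|QP\|<1$ there is a projection $P'$ of the same rank with range in $\ker Q$ and $\|P'-P\|\lesssim\|QP\|$), which you rightly flag as the delicate point and which does go through because $P_k\to 0$ strongly makes $\|Q_jP_k\|\to 0$ for each finite-rank $Q_j$ (Lemma~\ref{prp:WOT}). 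Your direct compression estimate for the ``$\Leftarrow$'' direction is a streamlined version of Proposition~\ref{prop:decomp}, and your directed-family construction of the maximal reducing subspace replaces the paper's quantitative bound $\ell\le M$ from Proposition~\ref{local-formulation}; both are sound. I see no gaps.
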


To prove the preceding theorem we will need the following lemmas that involve projections.

\begin{lemma}
\label{prp:WOT}
Let  $\{P_n\}_{n \in\N}$ and $\hatP$ be
orthogonal projections in $\cH$.
If $\hatP$ has finite rank and the sequence $\{P_n\}_{n \in\N}$ tends to zero in the strong
operator topology, i.e. $P_n\strongtends 0$ as $n\to \infty$,
then $\|\hatP P_n\|\to 0$.
\end{lemma}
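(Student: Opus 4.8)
The plan is to use the finite rank of $\hatP$ to reduce the operator-norm convergence $\|\hatP P_n\|\to 0$ to the pointwise (strong) convergence $P_n\strongtends 0$ on a finite-dimensional space. Write $\hatP$ as a finite sum $\hatP=\sum_{j=1}^{m}\langle\,\cdot\,,e_j\rangle e_j$, where $\{e_1,\dots,e_m\}$ is an orthonormal basis of $\Ran\hatP$ and $m=\rank\hatP<\infty$.

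First I would compute, for an arbitrary $x\in\cH$ with $\|x\|\le 1$,
\[
\|\hatP P_n x\|
=\Bigl\|\sum_{j=1}^m \langle P_n x, e_j\rangle e_j\Bigr\|
=\Bigl(\sum_{j=1}^m |\langle P_n x, e_j\rangle|^2\Bigr)^{1/2}
=\Bigl(\sum_{j=1}^m |\langle x, P_n e_j\rangle|^2\Bigr)^{1/2}
\le\Bigl(\sum_{j=1}^m \|P_n e_j\|^2\Bigr)^{1/2},
\]
using that $P_n$ is self-adjoint (so $\langle P_n x,e_j\rangle=\langle x,P_n e_j\rangle$) and the Cauchy--Schwarz inequality together with $\|x\|\le 1$. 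The right-hand side is independent of $x$, so taking the supremum over such $x$ gives
\[
\|\hatP P_n\|\le\Bigl(\sum_{j=1}^m \|P_n e_j\|^2\Bigr)^{1/2}.
\]

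Now since $P_n\strongtends 0$, each term $\|P_n e_j\|\to 0$ as $n\to\infty$, and because the sum is finite (only $m$ terms, $m$ fixed) the whole expression tends to $0$. Hence $\|\hatP P_n\|\to 0$, which is the claim. There is essentially no obstacle here: the only point to be careful about is that the reduction to finitely many vectors is legitimate precisely because $\hatP$ has finite rank — if $\hatP$ had infinite rank, strong convergence of $P_n$ would not force norm convergence of $\hatP P_n$, so the finite-rank hypothesis is used in an essential way. (One could alternatively phrase the same argument by noting $\|\hatP P_n\|=\|P_n\hatP\|$ and that $P_n\hatP\to 0$ in norm because $P_n\to 0$ strongly and $\hatP$ is a finite-rank, hence compact, operator; the two arguments are the same at bottom.)
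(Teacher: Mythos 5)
Your proof is correct and follows essentially the same route as the paper: the paper reduces to the rank-one case $\hatP=ff^*$ and bounds $\|\hatP P_n\|\le\|P_nf\|\to 0$, while you treat the general finite-rank case at once by expanding $\hatP$ over an orthonormal basis of its range and bounding $\|\hatP P_n\|$ by $\bigl(\sum_j\|P_ne_j\|^2\bigr)^{1/2}$. The underlying idea — self-adjointness of $P_n$ plus strong convergence tested on the finitely many vectors spanning $\Ran\hatP$ — is identical.
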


\begin{proof}
It suffices to prove the assertion for the case when
$\hatP$ has rank one: let $\hatP = ff^*$
for a unit vector $f$ in $\cH$. Then
$\|\hatP P_n\|\leq\|f\|\|P_nf\| =\|P_nf\| \to 0$ as $n\to \infty$.
\end{proof}

\begin{lemma}
\label{lem:3s^2}
Let $s \in \N$,  $P_1, \dots, P_s\in \cPfin(\cH)$,
$j=1, \dots, s$. If $\|P_j P_k\|\le \de := 1/(3 s^3)$ for all
indices $j\ne k$, then
the ranges of projections $P_j$ are linearly independent spaces and
$$
P_1+\dots+P_s \ge \frac 12  \Big(P_1 \vee\dots \vee P_s\Big)\;.
$$
\end{lemma}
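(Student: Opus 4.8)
The plan is to put $\mathcal{M}:=\Ran P_1+\dots+\Ran P_s$, let $Q$ denote the orthogonal projection onto $\overline{\mathcal{M}}$, and write $P:=P_1+\dots+P_s$. The first step is linear independence of the ranges: if $y_1+\dots+y_s=0$ with $y_j\in\Ran P_j$, then applying $P_k$ and using $P_ky_k=y_k$ gives $y_k=-\sum_{j\ne k}P_k y_j$; since $y_j=P_jy_j$, we have $P_ky_j=P_kP_jy_j$, so $\|y_k\|\le\delta\sum_{j\ne k}\|y_j\|\le\delta(s-1)\max_l\|y_l\|$, and choosing $k$ to maximize $\|y_l\|$ together with $\delta(s-1)<1$ forces all $y_j=0$. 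Hence $\mathcal{M}$ is finite-dimensional (so closed) and each $x\in\mathcal{M}$ has a unique expansion $x=\sum_j y_j$ with $y_j\in\Ran P_j$. Note also that $\Ran P_j\subseteq\mathcal{M}$ yields $QP_jQ=P_j$ for every $j$, so $P=QPQ$; consequently it suffices to prove $\langle Px,x\rangle\ge\frac12\|x\|^2$ for $x\in\mathcal{M}$, i.e. that the compression of $P$ to $\mathcal{M}$ dominates $\frac12 I_{\mathcal{M}}$.

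The heart of the argument is a perturbative estimate on $\mathcal{M}$. For $x\in\mathcal{M}$ with $\|x\|=1$, write $x=\sum_j y_j$ as above. Using $y_j=P_jy_j$, $y_k=P_ky_k$ and the hypothesis, one gets $|\langle y_j,y_k\rangle|=|\langle y_j,P_jP_ky_k\rangle|\le\delta\,\|y_j\|\,\|y_k\|$ for $j\ne k$; expanding $1=\|x\|^2=\sum_j\|y_j\|^2+\sum_{j\ne k}\langle y_j,y_k\rangle$ and applying Cauchy--Schwarz to the double sum gives $\sum_j\|y_j\|^2\le(1-\delta s)^{-1}$. Moreover $Px=\sum_{j,k}P_jy_k=x+\sum_{j\ne k}P_jy_k$, so $\|Px-x\|\le\delta\sum_{j\ne k}\|y_k\|=\delta(s-1)\sum_k\|y_k\|\le\delta(s-1)\sqrt{s}\,\bigl(\textstyle\sum_k\|y_k\|^2\bigr)^{1/2}\le\delta(s-1)\sqrt{s}\,(1-\delta s)^{-1/2}$. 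Plugging in $\delta=1/(3s^3)$ gives $\delta s\le\tfrac13$, hence $1-\delta s\ge\tfrac23$, and $\delta(s-1)\sqrt{s}\le\delta s^{3/2}=1/(3s^{3/2})\le\tfrac13$, so $\|Px-x\|\le 1/\sqrt{6}<\tfrac12$. Therefore $\langle Px,x\rangle=1+\langle Px-x,x\rangle\ge 1-\|Px-x\|\ge\tfrac12$.

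It remains to globalize: for arbitrary $x\in\cH$, using $P=QPQ$ and the bound just established applied to $Qx\in\mathcal{M}$, $\langle Px,x\rangle=\langle PQx,Qx\rangle\ge\tfrac12\|Qx\|^2=\tfrac12\langle Qx,x\rangle$, i.e. $P\ge\tfrac12 Q=\tfrac12\bigl(P_1\vee\dots\vee P_s\bigr)$. (The case $s=1$ is trivial, the hypothesis being vacuous and $P=Q=P_1$.)

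I expect the only delicate point to be the bookkeeping of constants in the second paragraph, namely arranging that the chosen value $\delta=1/(3s^3)$ forces the numerical inequality $\|Px-x\|\le\tfrac12$ uniformly in $s$; the rest is routine finite-dimensional linear algebra, so no serious obstacle is anticipated.
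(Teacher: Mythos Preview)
Your proof is correct and follows the same strategy as the paper's: write $f\in\mathcal{M}$ as $\sum_j y_j$ with $y_j\in\Ran P_j$, control the cross terms via $\|P_jP_k\|\le\delta$, and combine with Cauchy--Schwarz to get the lower bound on $\mathcal{M}$. The only cosmetic difference is that the paper expands $\langle Pf,f\rangle=\sum_{j,k,\ell}\langle P_j f_k,f_\ell\rangle$ and bounds it from below directly, whereas you pass through the norm estimate $\|Px-x\|\le 1/\sqrt{6}$ and then use $\langle Px,x\rangle\ge 1-\|Px-x\|$.
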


\begin{proof}
Let $f\in \Big(P_1\vee\dots\vee P_s\Big)\cH$. Then $f=\sum_j f_j$,
where $f_j\in P_j\cH$. If either $j\ne k$ or $j\ne \ell$, then
$\big|\langle P_j f_k, f_\ell \rangle\big|\le \de \|f_k\|\|f_\ell\|$.
Hence
\begin{multline*}
\sum_{j=1}^s \;
\langle
P_j f, f
\rangle
=
\sum_j
\sum_{k, \ell}
\langle
P_j f_k, f_\ell
\rangle
=\sum_k \|f_k\|^2
 +\sum_j\sum_{\scriptsize
              \begin{array}{c}
            j\ne k \; \mathrm{or}\\
            j \ne \ell
               \end{array}
}
\mathrm{Re}\;\langle
P_j f_k, f_\ell
\rangle
\\
\ge
\sum_k \|f_k\|^2
-s \de
\Big(\sum_k \|f_k\|\Big)^2
\ge
(1-s^3\de)\sum_k \|f_k\|^2
=
\frac 23
\sum_k \|f_k\|^2.
\end{multline*}
In particular, if $f=0$, then $f_j=0$ for all $j$. Hence the ranges of $P_j$ are
linearly independent.

Similar arguments show that $\sum_j\|f_j\|^2\ge \frac 1{(1+s^2\de)}\, \|f\|^2$.
Combining the last two inequalities and since $\delta= \frac{1}{3 s^3}$
we obtain the estimate
$\sum_j
\langle
P_j f, f
\rangle \geq \frac{1}{2}\|f\|^2
$
which proves the last statement.
\end{proof}

\begin{lemma}
\label{lem:close-projs}
If $P,Q\in \cPfin(\cH)$ and $L\in\cL(\cH)$, then
$$
\big|
\phi(L,P)-\phi(L,Q)
\big|
\le
4\,\|L\|\cdot\,
\frac {\|P-Q\|_2}
{\max\big(\|P\|_2,\|Q\|_2\big)}\, .
$$
\end{lemma}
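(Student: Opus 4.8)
The plan is to reduce the estimate to two elementary facts about the Hilbert--Schmidt norm: the reverse triangle inequality $\big|\,\|A\|_2-\|B\|_2\,\big|\le\|A-B\|_2$, and the bound $\|AB\|_2\le\|A\|\,\|B\|_2$ (operator norm times Hilbert--Schmidt norm), applied to commutators. By symmetry of the asserted inequality I may assume without loss of generality that $\|P\|_2\ge\|Q\|_2$, so that the denominator on the right-hand side equals $\|P\|_2$. Recalling that $\phi(L,P)=\|[L,P]\|_2/\|P\|_2$, the quantity to be controlled is
\[
\phi(L,P)-\phi(L,Q)=\frac{\|[L,P]\|_2}{\|P\|_2}-\frac{\|[L,Q]\|_2}{\|Q\|_2}.
\]

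The one mildly delicate point is that these two terms carry different normalizations, and I would deal with it by inserting the hybrid term $\|[L,Q]\|_2/\|P\|_2$, so that
\[
\phi(L,P)-\phi(L,Q)=\frac{\|[L,P]\|_2-\|[L,Q]\|_2}{\|P\|_2}+\|[L,Q]\|_2\Big(\frac1{\|P\|_2}-\frac1{\|Q\|_2}\Big).
\]
For the first summand, the reverse triangle inequality gives $\big|\,\|[L,P]\|_2-\|[L,Q]\|_2\,\big|\le\|[L,P-Q]\|_2$, and expanding $[L,P-Q]=L(P-Q)-(P-Q)L$ and using $\|AB\|_2\le\|A\|\,\|B\|_2$ twice yields $\|[L,P-Q]\|_2\le 2\|L\|\,\|P-Q\|_2$. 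For the second summand I use $\|[L,Q]\|_2\le 2\|L\|\,\|Q\|_2$ together with
\[
\Big|\frac1{\|P\|_2}-\frac1{\|Q\|_2}\Big|=\frac{\big|\,\|P\|_2-\|Q\|_2\,\big|}{\|P\|_2\,\|Q\|_2}\le\frac{\|P-Q\|_2}{\|P\|_2\,\|Q\|_2},
\]
so that this summand is bounded by $2\|L\|\,\|Q\|_2\cdot\|P-Q\|_2/(\|P\|_2\,\|Q\|_2)=2\|L\|\,\|P-Q\|_2/\|P\|_2$.

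Adding the two contributions produces the constant $4$, and since $\|P\|_2=\max(\|P\|_2,\|Q\|_2)$ under our normalization assumption, this is exactly the claimed bound. I do not expect any genuine obstacle here; the only thing one must be careful about is to avoid losing the correct denominator when comparing the two normalized commutators, which the hybrid-term splitting above takes care of cleanly.
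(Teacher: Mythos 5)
Your proof is correct and is essentially the same as the paper's: both arguments reduce to the reverse triangle inequality $\bigl|\,\|[L,P]\|_2-\|[L,Q]\|_2\,\bigr|\le\|[L,P-Q]\|_2\le 2\|L\|\,\|P-Q\|_2$ together with $\bigl|\,\|P\|_2-\|Q\|_2\,\bigr|\le\|P-Q\|_2$, each contributing $2\|L\|\,\|P-Q\|_2$ to the numerator. The paper merely clears denominators first (bounding $\|Q\|_2\,\|P\|_2\,|\phi(L,P)-\phi(L,Q)|$ directly) rather than inserting your hybrid term, which is the same decomposition in different clothing.
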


\begin{proof}[Proof]
Without loss of generality, let us assume that $\|P\|_2\le \|Q\|_2$.
Then we have
\begin{align*}
\|Q\|_2\, \|P\|_2  \big| \phi(L,P)-\phi(L,Q) \big|
& =
\Big|
\|Q\|_2\, \|[L,P]\|_2 - \|P\|_2\, \|[L,Q]\|_2
\Big| \\
& \le
\big|\|Q\|_2-\|P\|_2\big|\cdot
\|[L,P]\|_2 + \|P\|_2  \, \|[L,Q-P]\|_2 \\
& \le 4\, \|L\|\,\|Q-P\|_2 \|P\|_2,
\end{align*}
which implies the desired estimate.
\end{proof}

\begin{proposition}\label{prop:decomp}
Let $T= A\oplus \wt T$ on $\cH=\cH_0\oplus\HH$, where $\dim \cH_0 < \infty$ (hence $A$ is a finite dimensional operator).
Then $T$ has a proper F\o lner sequence if and only if $\wt T$ has a proper F\o lner sequence.
\end{proposition}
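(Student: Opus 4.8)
The plan is to prove both implications using the local characterization of proper F\o lner sequences given in Proposition~\ref{local-formulation}, together with the two technical lemmas about merging and comparing projections (Lemma~\ref{lem:3s^2} and Lemma~\ref{lem:close-projs}).

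The easy direction is that if $\wt T$ has a proper F\o lner sequence $\{\wt P_n\}$ on $\HH$, then $T=A\oplus\wt T$ has one. Indeed, set $P_n:=\1_{\cH_0}\oplus\wt P_n$. This is an increasing sequence of finite rank projections converging strongly to $\1$, and $[T,P_n]=0\oplus[\wt T,\wt P_n]$, so $\|[T,P_n]\|_2=\|[\wt T,\wt P_n]\|_2$ while $\|P_n\|_2^2=\dim\cH_0+\|\wt P_n\|_2^2\ge\|\wt P_n\|_2^2$; hence $\phi(T,P_n)\le\phi(\wt T,\wt P_n)\to0$. (Equivalently this is a special case of Proposition~\ref{localization} applied to the summand $\wt T$, once one observes the roles can be interchanged — but writing it out directly is shorter.)

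For the converse, suppose $T=A\oplus\wt T$ has a proper F\o lner sequence; we must produce one for $\wt T$. Let $Q'\in\cPfin(\HH)$ and $\eps>0$ be given; by Proposition~\ref{local-formulation}(iii) it suffices to find $R'\in\cPfin(\HH)$ with $R'\ge Q'$ and $\phi(\wt T,R')<\eps$. Let $m:=\dim\cH_0<\infty$ and apply Proposition~\ref{local-formulation}(ii) to $T$: for any prescribed rank we can find $P\in\cPfin(\cH)$ with $\rank P$ as large as we like and $\phi(T,P)$ as small as we like. The idea is to choose $P$ with $\phi(T,P)$ small and $\|P\|_2$ large compared to $m$ and to $\|Q'\|_2$, then to ``push $P$ off $\cH_0$'': replace $P$ by $P_\sharp:=\1_{\cH_0}\vee P$ and then compress to $\HH$, i.e.\ set $R_0:=(\1-\1_{\cH_0})\,P_\sharp$ — but $P_\sharp(\1-\1_{\cH_0})$ need not be a projection, so instead I would argue more carefully. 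A cleaner route: since $\rank(\1_{\cH_0}\vee P)-\rank\1_{\cH_0}\le\rank P$ and $\1_{\cH_0}\vee P\ge P$, Lemma~\ref{lem:close-projs} combined with the bound $\|P_\sharp-P\|_2\le\sqrt m$ shows $\phi(T,P_\sharp)\le\phi(T,P)+4\|T\|\sqrt m/\|P\|_2$, which is $<\eps/2$ once $\|P\|_2$ is large. Now $P_\sharp\ge\1_{\cH_0}$, so $P_\sharp=\1_{\cH_0}\oplus R_0$ for a genuine projection $R_0\in\cPfin(\HH)$ with $\rank R_0=\rank P_\sharp-m$, and $[T,P_\sharp]=0\oplus[\wt T,R_0]$, while $\|P_\sharp\|_2^2=m+\|R_0\|_2^2$; hence $\phi(\wt T,R_0)=\|[\wt T,R_0]\|_2/\|R_0\|_2\le\sqrt{(m+\|R_0\|_2^2)}/\|R_0\|_2\cdot\phi(T,P_\sharp)<\eps/2\cdot\sqrt{1+m/\|R_0\|_2^2}<\eps$ provided $\|R_0\|_2$ is large enough (which we arrange by taking $\rank P$ large). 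Finally put $R':=R_0\vee Q'$; then $R'\ge Q'$, and exactly as in the proof of (ii)$\implies$(iii) in Proposition~\ref{local-formulation} — using $\rank R'-\rank R_0\le\rank Q'$, hence $\|R'-R_0\|_2\le\|Q'\|_2$, and Lemma~\ref{lem:close-projs} or the direct commutator estimate — one gets $\phi(\wt T,R')<\eps$ once $\|R_0\|_2$ was chosen large relative to $\|Q'\|_2$. This gives (iii) of Proposition~\ref{local-formulation} for $\wt T$, hence a proper F\o lner sequence.

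The only mildly delicate point is bookkeeping the order of quantifiers: one must first fix $Q'$ and $\eps$, then choose $P$ with $\rank P$ and $1/\phi(T,P)$ both large enough as functions of $m$, $\|T\|$, $\|Q'\|_2$ and $\eps$; each of the three perturbation steps (passing to $P_\sharp$, restricting to $\HH$, joining with $Q'$) contributes an error that is made $<\eps/2$ or absorbed, and all of these are controlled by making $\|P\|_2$ large, which Proposition~\ref{local-formulation}(ii) permits. No new ideas beyond those already used in the proof of Proposition~\ref{local-formulation} are needed; the lemmas \ref{lem:3s^2} and \ref{lem:close-projs} are precisely the tools for the merging and comparison estimates.
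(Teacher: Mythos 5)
Your proposal is correct and follows essentially the same route as the paper: perturb a F\o lner projection $P$ for $T$ with $\|P\|_2$ large by a rank-$\le\dim\cH_0$ correction so that it lives entirely in $\HH$, control the change in $\phi$ via Lemma~\ref{lem:close-projs} (error $4\|T\|\sqrt{\dim\cH_0}/\|P\|_2\to 0$), and invoke Proposition~\ref{local-formulation}. The only cosmetic differences are that the paper cuts $P$ down to $P\cH\cap(0\oplus\HH)$ instead of enlarging to $\1_{\cH_0}\vee P$ and subtracting $\1_{\cH_0}$, and that the paper concludes via criterion (ii) of Proposition~\ref{local-formulation}, which makes your final join with $Q'$ unnecessary.
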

\begin{proof}
The implication ``$\Leftarrow$'' follows from Proposition~\ref{localization}. To prove the implication ``$\Rightarrow$''
suppose that $T$ has a proper F\o lner sequence and
put $d:=\dim \cH_0 < \infty$. For any $\varepsilon$ and any
$N > d$ there exists a $P\in \cPfin(\cH)$ such that  $\rank P\ge N $ and $\phi(T,P)< \eps$.
Now, for each such $P$ there exists also a $\hatP\in \cPfin(\HH)$ such that $0\oplus\hatP\leq P$ and
$\rank \hatP + d \geq \rank P$. (Take as $\hatP$, e.g., the orthogonal projection onto $P\cH\cap (0\oplus\HH)$.)
Using Lemma~\ref{lem:close-projs} we get
\begin{eqnarray*}
 \phi\big(\wt T,\hatP\big) &=& \phi\big(T,0\oplus\hatP\big)\;\leq\;\phi(T,P)+\big|\phi(T,P)-\phi\big(T,0\oplus\hatP\big)\big|\\
                   &\leq& \phi(T,P) +\frac{ 4\|T\|\;\|P-(0\oplus\hatP)\|_2}{\|P\|_2}\\
                   &\leq& \phi(T,P) +\frac{ 4\|T\|\;d^\frac12}{\|P\|_2}      < 2 \eps\;,
\end{eqnarray*}
where for the last inequality we have chosen $P$ so that
$
\|P\|_2 > \frac{4\|T\|d^\frac12}{\eps}.
$
By Proposition~\ref{local-formulation}~(ii) it follows that $\wt T$ has also a proper F\o lner sequence.
\end{proof}

\begin{proposition}\label{prop:splitting}
Let $T\in\cL(\cH)$ and suppose that $\phi(T,P)\not=0$ for all $ P\in \cPfin(\cH)$. If there is a
F\o lner sequence of projections $\{P_n\}_n\subset \cPfin(\cH)$ of a constant rank,
%
then $T$ has a proper F\o lner sequence.
\end{proposition}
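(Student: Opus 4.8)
Let $\{P_n\}_n$ be a F\o lner sequence for $T$ made of projections of constant rank $r$; then $\|P_n\|_2=\sqrt r$ and $\|[T,P_n]\|_2=\sqrt r\,\phi(T,P_n)\to0$. Note that the hypothesis $\phi(T,P)\neq0$ for every $P\in\cPfin(\cH)$ says precisely that no finite-rank projection reduces $T$, i.e. $T$ is not finite block reducible (in particular $\dim\cH=\infty$). The strategy is to manufacture, for each $s\in\N$, a \emph{single} projection $Q^{(s)}$ of rank $sr$ with $\phi(T,Q^{(s)})\to0$ as $s\to\infty$, and then to conclude by Remark~\ref{rem:projections}(i) (equivalently Proposition~\ref{local-formulation}(ii)) that $T$ has a proper F\o lner sequence.

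The crucial step — and the one I expect to be the main obstacle — is to show that $P_n\strongtends0$. Since $\{P_n\}_n$ is bounded and $\cH$ is separable, it suffices to check that every weak-operator cluster point of $\{P_n\}_n$ vanishes. So assume $P_{n_k}\weaktends B$. Then $B=B^{*}$, $0\le B\le\1$, and $\Tr B\le\liminf_k\Tr P_{n_k}=r$, so $B$ is trace class, hence compact. At the same time $[T,P_{n_k}]\to0$ in operator norm, while $P_{n_k}\weaktends B$ forces $[T,P_{n_k}]\weaktends[T,B]$; comparing the two, $[T,B]=0$. If $B\neq0$, its largest eigenvalue $\mu=\|B\|>0$ is isolated in $\sigma(B)$ and has a finite-dimensional eigenspace, whose orthogonal projection $E\in\cPfin(\cH)$ commutes with $T$ (because $T$ commutes with $B$, hence with all spectral projections of $B$); thus $\phi(T,E)=0$, contradicting the hypothesis. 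Therefore $B=0$, every cluster point is $0$, and $P_n\weaktends0$ — which for self-adjoint idempotents is the same as $P_n\strongtends0$ (since $\|P_nx\|^2=\langle P_nx,x\rangle$). This is the only place where the hypothesis $\phi(T,P)\neq0$ is used.

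The rest is bookkeeping with almost-orthogonal projections. Fix $s$ and set $\delta_s:=1/(3s^3)$. Using $P_n\strongtends0$ and Lemma~\ref{prp:WOT} (applied to $P_{n_1}\vee\dots\vee P_{n_{j-1}}$, which dominates each $P_{n_i}$, $i<j$), together with $\phi(T,P_n)\to0$, one selects inductively $n_1<\dots<n_s$ with $\|P_{n_i}P_{n_j}\|\le\delta_s$ for $i\neq j$ and $\phi(T,P_{n_i})\le s^{-2}$ for all $i$. By Lemma~\ref{lem:3s^2} the ranges of the $P_{n_i}$ are linearly independent — so $Q^{(s)}:=P_{n_1}\vee\dots\vee P_{n_s}$ has rank $sr$ — and $S_s:=\sum_{i=1}^{s}P_{n_i}$ satisfies $\tfrac12Q^{(s)}\le S_s\le sQ^{(s)}$, whence $\sigma(S_s)\subset\{0\}\cup[\tfrac12,s]$ and $Q^{(s)}$ is the Riesz projection of $S_s$ for the part of its spectrum in $[\tfrac12,s]$. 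With a circle $\Gamma_s$ that encloses $[\tfrac12,s]$, leaves out $0$, and stays at distance $\ge\tfrac18$ from $\sigma(S_s)$ (so $|\Gamma_s|=O(s)$ and $\|(z\1-S_s)^{-1}\|\le8$ on $\Gamma_s$), the identities
\[
Q^{(s)}=\frac{1}{2\pi\ii}\oint_{\Gamma_s}(z\1-S_s)^{-1}\,dz\quad\text{and}\quad [T,Q^{(s)}]=\frac{1}{2\pi\ii}\oint_{\Gamma_s}(z\1-S_s)^{-1}[T,S_s](z\1-S_s)^{-1}\,dz
\]
yield $\|[T,Q^{(s)}]\|_2\le C\,s\,\|[T,S_s]\|_2$ for an absolute constant $C$. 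Since $\|[T,S_s]\|_2\le\sum_i\|[T,P_{n_i}]\|_2\le s\cdot s^{-2}\sqrt r=\sqrt r/s$ and $\|Q^{(s)}\|_2=\sqrt{sr}$, we get $\phi(T,Q^{(s)})\le C/\sqrt s\to0$, while $\rank Q^{(s)}=sr$ is unbounded; Remark~\ref{rem:projections}(i) then finishes the proof. (One can avoid the contour integral altogether by instead choosing $\delta_s$ small enough that $\|Q^{(s)}-S_s\|$ is negligible and bounding $[T,Q^{(s)}]=[T,S_s]+[T,Q^{(s)}-S_s]$ directly; I find the resolvent version cleaner to state.)
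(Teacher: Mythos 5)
Your proof is correct, and its overall architecture is the same as the paper's: first establish $P_n\strongtends 0$ from the hypothesis that no $P\in\cPfin(\cH)$ satisfies $\phi(T,P)=0$, then use Lemma~\ref{prp:WOT} together with Lemma~\ref{lem:3s^2} to weld unboundedly many almost-orthogonal members of the sequence into a single projection of large rank and small $\phi$, and finish with Proposition~\ref{local-formulation}. You implement two of the sub-steps differently, both legitimately. For $P_n\strongtends 0$, the paper factors $P_n=f_nf_n^*$ through isometries $f_n\colon\C^m\to\cH$, extracts a WOT-limit $g$ of the $f_n$ and an auxiliary limit of $f_n^*Tf_n$, and shows that $\Ran g\ne 0$ would yield a finite-rank projection commuting with $T$; you instead take a WOT cluster point $B$ of the projections themselves, note it is positive, trace class (lower semicontinuity of the trace) and commutes with $T$ because $[T,P_{n_k}]\to0$ in norm, and then produce the forbidden finite-rank reducing projection as the top spectral projection of $B$ (since $B=B^*$ and $TB=BT$ imply $T^*B=BT^*$, the eigenspace is genuinely reducing). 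This is a clean shortcut that bypasses the paper's manipulation of $h_n=Tf_n-f_n\alpha_n$. For the final estimate, the paper inverts $\sum_jP_{n_j}$ on $\Ran\bigl(P_{n_1}\vee\dots\vee P_{n_N}\bigr)$ with inverse of norm at most $2$ (courtesy of Lemma~\ref{lem:3s^2}) and bounds $\|(I-P)TP\|$ directly, while you realize the join as a Riesz projection of $\sum_jP_{n_j}$ and push the commutator through the contour integral, getting $\phi\bigl(T,Q^{(s)}\bigr)=O(s^{-1/2})$; the contour can indeed be kept at distance bounded below from $\sigma(S_s)\subset\{0\}\cup[\tfrac12,s]$ with length $O(s)$, so the constants work out. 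Both variants are sound, and neither buys anything essential over the other beyond taste.
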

\begin{proof}
Let $\{P_n\}_n$ be a sequence of projections such that
the rank $m:= \rank P_n$ is constant and non-zero.
We can represent them as $P_n=f_nf_n^*$, where $f_n\colon\C^m\to \cH$
are isometries. Moreover, by weak compactness of the unit ball in $\cH$
there is a contraction $g\colon\C^m\to \cH$, $\|g\|\le 1$, such that
(passing possibly to a subsequence)
\beqn
\label{g}
f_n \weaktends g\;.
\neqn
First we prove that $g=0$. For this, suppose that $g\ne 0$
and we will show that this leads to a contradiction. If $g\ne 0$
there exists some $k$, with $1\le k \le m$, and an
isometry $g_0\colon\C^k\to \cH$ such that $\Ran g=\Ran g_0$.
Notice that
$$
(I-P_n)T P_n=
\big(
T f_n- f_n(f_n^* T f_n)
\big)f_n^*.
$$
Put $\alpha_n = f_n^* T f_n \colon\C^m\to \C^m$ and
\beqn
\label{h-n}
h_n= T f_n - f_n \alpha_n\colon \C^m\to \cH,
\neqn
so that
$$
(I-P_n)T P_n=h_nf_n^*.
$$
Since projections $P_n$ have constant rank and $\phi(T, P_n)\to 0$ it follows that
\[
\|(I-P_n)T P_n\| \to 0 \quad\mathrm{as}\quad n\to \infty\;.
\]
Thus $\|h_n\| \to 0$. Passing possibly to a subsequence,
we can assume that there is a limit
$\lim_{n\to \infty} \alpha_n=\alpha\in \cL(\C^m)$.
By \eqref{h-n}, $f_n\alpha_n=T f_n-h_n$, so by \eqref{g}, we get
$$
f_n \alpha_n \weaktends T g \qquad \text{as}\enspace n\to \infty.
$$
By applying \eqref{g} once again, it follows that $g\alpha= T g$. In particular,
$\Ran(T g)\subset \Ran g$.
Notice that $P_{g_0} := g_0g_0^*\not=0$ is the orthogonal projection onto
$\Ran g_0$. Since $\Ran g = \Ran g_0$, we arrive at
the equality
$(I-P_{g_0})T P_{g_0}=0$.
In the same way, we can prove that
$(I-P_{g_0}) T^* P_{g_0}=0$ (for the same isometry $g_0$).
Hence $\phi\big(T, P_{g_0}\big)=0$, which
contradicts the assumption. Therefore we must have
$g=0$, that is, $f_n\weaktends 0$ as $n\to \infty$.
Hence $\big| \langle P_n a, b\rangle \big|
\le \|f_n^*a\|\|f_n^*b\|\to 0$ for any $a,b\in \cH$,
that is, $P_n\weaktends 0$, hence $P_n\strongtends 0$.

To show that $T$ has a proper F\o lner sequence let $\eps>0$ and $N\in\N$.
Consider also
a positive $\de < \min\Big(\frac{\eps}{4N}\,,\, \frac{1}{3N^3} \Big)$.
From the assumption $\phi(T,P_n)\to 0$
and Lemma~\ref{prp:WOT} we
can choose projections $P_{n_1}, P_{n_2}, \dots, P_{n_N}$ from
the sequence $\{P_n\}_n$ that satisfy
$$
\phi(T, P_{n_j})<\de
\quad  \text{and} \quad \|P_{n_j} P_{n_k}\|<\de
$$
for all indices $j\ne k$, $1\le j, k\le N$.
To simplify notation we will write $P_j$ instead of $P_{n_j}$. Put $P:=(P_1\vee \dots \vee P_N)$.
Since $(I-P)(I-P_j)=(I-P)$ we have that $\|(I-P)T P_j\|\leq\|(I-P_j)T P_j\|<\de$
for all $j=1, \dots, N$.
Hence $\big\|(I-P)T \big(\sum_j P_j\big)\big\|<N\de$. Finally,
Lemma~\ref{lem:3s^2} implies that
$\sum_j P_j\ge \frac 12 P$.
%
%
We denote
the inverse of $\sum_j P_j$
on $P\cH$ by $Q$. Then $\|Q\|\le 2$
and $P=\big(\sum_j P_j\big) Q$. This gives that
$\|(I-P)T P\|<2N\de$. In the same way we show that
$\|PT (I-P)\|<2N\de$, and, therefore,
$\phi(T, P)<4N\de< \eps$ and by
Lemma~\ref{lem:3s^2}, $\rank P\ge N$. From Proposition~\ref{local-formulation}
we conclude that $T$ has a proper F\o lner sequence.
\end{proof}

With the preceding material we can now prove the main result of this section:
\begin{proof}[Proof of Theorem~\ref{thm:str-foln}]
If $T\in\cL(\cH)$ has an orthogonal sum
representation $T=A\oplus \wt T$ on $\cH=\cH_0\oplus\HH$, where $\dim \cH_0 < \infty$
and $\wt T$ is strongly non-F\o lner, then Proposition~\ref{prop:decomp} implies that $T$ has
no proper F\o lner sequence.

To prove the other implication of the theorem
suppose that $T$ has no proper F\o lner sequence. By
Proposition~\ref{local-formulation},
there exist some $\eps'>0$ and $M\in \N$ such that
$$
\forall P\in \cPfin(\cH)\;,
\qquad
\rank P>M \implies \phi(T,P) \geq\eps'.
$$
In particular, it follows that if $T$ decomposes as
$T= A\oplus \TT$, where $A\in \cL(\cHknot)$, with
$\cHknot$ finite dimensional, then $\dim\cHknot\le M$.
Consider a decomposition
$T=A \oplus \TT$, where
$A\colon\cHknot\to\cHknot$, $\TT\colon\HH\to \HH$, and
$\ell:= \dim \cHknot$ is the largest possible.
(The case where $\ell=0$ is not excluded.)
We prove next that $\TT$ is a strongly non-F\o lner operator:
by Proposition~\ref{prop:decomp} and since $\ell\le M$, we
have that $\TT$ has no proper F\o lner sequence.
Therefore there exist $\eps_1>0$ and $s\in \N$ such that
\beqn
\label{eps1}
\forall \hatP\in \cPfin(\HH)\;,
\qquad
\rank \hatP > s \implies \phi(\TT,\hatP) \geq\eps_1\;.
\neqn
On the other hand, since $\ell$ is the largest possible,
it follows that
\begin{equation}
\label{3stars}
\hatP\in \cPfin(\HH)\;\mathrm{with}\; \hatP\not= 0\;
\implies \phi(\TT,\hatP) \ne 0\;.
\end{equation}
We claim that \eqref{3stars} implies that
\beqn
\label{star}
\exists\; \eps_2 > 0\quad \forall\,
\hatP\in \cPfin(\HH)
\qquad
0<\rank \hatP \leq s
 \implies \phi(\TT,\hatP) \ge \eps_2.
\neqn
Then, putting $\eps=\min\{\eps_1, \eps_2\}$,  we will conclude that $\TT$ is
strongly non-F\o lner (cf.~Definition~\ref{def:strongNfol}).
So it remains to deduce assertion \eqref{star}.

Assume that \eqref{star} does not hold. Then there is some
$m$, with $1\le m\le s$, and a sequence of projections
$\{P_n\}_n\subset\cPfin(\HH)$
of rank $m$ that is a F\o lner sequence, i.e., we have
\begin{equation}\label{eq:to0}
 \lim_{n\to\infty}\phi(\TT, P_n)= 0\;.
\end{equation}
From (\ref{3stars}) and Proposition~\ref{prop:splitting}
applied to the operator $\TT$ we conclude that $\TT$
has a proper F\o lner sequence. But this contradicts
(\ref{eps1}) and, therefore, (\ref{star}) must hold.
\end{proof}

\section{Relation between proper F\o lner sequences and finite operators}\label{sec:relation}

We show in this section a useful characterization of finite operators
that involves proper F\o lner sequences. Recall the definitions
and results stated at the end of Section~\ref{sec:foelner}.

\begin{theorem}\label{teo:williams-foelner}
Let $T\in\cL(\cH)$. Then, $T$ is a finite operator if and only if $T$ is finite block
reducible or $T$ has a proper F\o lner sequence.
\end{theorem}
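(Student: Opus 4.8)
The plan is to prove both implications using the characterization of finite operators via amenable traces (Williams' Theorem and Corollary~\ref{cor:fin=Fol_seq}) together with the structural result Theorem~\ref{thm:str-foln}.

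\medskip

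First I would prove the easy direction: if $T$ is finite block reducible or $T$ has a proper F\o lner sequence, then $T$ is finite. If $T$ has a proper F\o lner sequence, then in particular it has a F\o lner sequence, so by Corollary~\ref{cor:fin=Fol_seq} it is finite. If $T$ is finite block reducible, this is part of Williams' Theorem as stated in the excerpt. So this direction is immediate from the results already quoted.

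\medskip

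For the converse, suppose $T$ is finite but \emph{not} finite block reducible; I must produce a proper F\o lner sequence for $T$. If $\dim\cH<\infty$ there is nothing of interest (or one argues separately), so assume $\dim\cH=\infty$. The key step is to rule out the class $\mathcal{S}$: I claim $T$ cannot be strongly non-F\o lner. Indeed, if $T$ were strongly non-F\o lner, then there is $\eps>0$ with $\phi(T,P)\ge\eps$ for every $P\in\cPfin(\cH)$; I want to deduce that $C^*(T,\1)$ has no amenable trace, contradicting Corollary~\ref{cor:fin=Fol_seq}(iii) (which holds because $T$ is finite). By Proposition~\ref{pro:hypertrace}, it suffices to show $C^*(T,\1)$ has no F\o lner sequence at all. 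But any F\o lner sequence $\{P_n\}$ for $C^*(T,\1)$ is in particular a F\o lner sequence for $T$, i.e.\ $\phi(T,P_n)\to 0$, directly contradicting $\phi(T,P_n)\ge\eps$. Hence $T$ is not strongly non-F\o lner. Now apply Theorem~\ref{thm:str-foln}: since $T$ is not strongly non-F\o lner, it does not fit the description ``$T=A\oplus\wt T$ with $\dim\cH_0<\infty$ and $\wt T$ strongly non-F\o lner'' unless $\dim\cH_0>0$ and $\wt T$ is strongly non-F\o lner — but the latter would make $T$ finite block reducible, contradicting our assumption, OR $\dim \cH_0 = 0$ and $\wt T = T$ is strongly non-F\o lner, which we just excluded. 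More carefully: Theorem~\ref{thm:str-foln} says $T$ has no proper F\o lner sequence iff $T=A\oplus\wt T$ with $\cH_0$ finite-dimensional and $\wt T$ strongly non-F\o lner. If $T$ has no proper F\o lner sequence, take such a decomposition. If $\cH_0\ne 0$, then $T$ is finite block reducible, contrary to hypothesis. If $\cH_0=0$, then $T=\wt T$ is strongly non-F\o lner, which we have just shown is impossible for a finite operator. Either way we reach a contradiction, so $T$ must have a proper F\o lner sequence.

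\medskip

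The main obstacle is making sure the logic of the second direction is airtight: one has to be careful that ``not finite block reducible'' combined with the decomposition from Theorem~\ref{thm:str-foln} forces the finite-dimensional summand to be trivial, and then separately exclude the strongly non-F\o lner case using that finiteness gives an amenable trace hence a F\o lner sequence. A minor point to handle is the degenerate case $\dim\cH<\infty$, where every operator is trivially finite and finite block reducible (one may simply note the statement is to be read with the standing conventions, or dispatch it in one line). No new estimates are needed; the proof is a synthesis of Williams' Theorem, Proposition~\ref{pro:hypertrace}, and Theorem~\ref{thm:str-foln}.
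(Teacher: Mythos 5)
Your proof is correct and non-circular, but it takes a genuinely different route from the paper's. The paper does not invoke Theorem~\ref{thm:str-foln} in this proof at all: for the hard direction it first checks whether $\phi(T,P)=0$ for some $P\in\cPfin(\cH)$ (which gives $[T,P]=0$ and hence finite block reducibility at once), and otherwise extracts a F\o lner sequence $\{P_n\}$ from the amenable trace and upgrades it to a \emph{proper} one by hand --- via Proposition~\ref{prop:splitting} when the ranks of the $P_n$ stay bounded, and via Remark~\ref{rem:projections}~(i) when they are unbounded. You instead delegate that work to Theorem~\ref{thm:str-foln}: assuming $T$ has no proper F\o lner sequence you obtain the decomposition $T=A\oplus\wt T$ with $\wt T$ strongly non-F\o lner, and then run the dichotomy $\cH_0\neq 0$ (finite block reducible) versus $\cH_0=0$ (so $T$ itself is strongly non-F\o lner, which you correctly rule out for finite operators using Williams' theorem together with Proposition~\ref{pro:hypertrace} --- essentially the observation that later becomes Corollary~\ref{cor:yes-no}~(ii), proved here directly so there is no circularity). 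Since Theorem~\ref{thm:str-foln} is established in Section~\ref{sec:NF} independently of Theorem~\ref{teo:williams-foelner}, the logic is sound; your version is shorter at the cost of using the heavier structure theorem as a black box, whereas the paper's version exposes the underlying mechanism (constant-rank splitting versus unbounded-rank extraction) explicitly. Your easy direction and your dispatch of the finite-dimensional degenerate case match the paper.
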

\begin{proof}
(i) If $T$ is finite block reducible, the $T$ is a finite operator (cf.~\cite{Williams70}).
Moreover, if $T$ has a proper F\o lner sequence, then the C*-algebra $C^*(T,\1)$ has the same proper F\o lner sequence
and, by Proposition~\ref{pro:hypertrace}, it also has an amenable trace. Then,
by Williams' theorem (see also Theorem~4 in \cite{Williams70}) we conclude that $T$
is finite.

(ii) To prove the other implication, assume
$T$ is a finite operator. We consider several cases. If there exists a (non-zero)
$P\in\cPfin(\cH)$ such that $\phi(T,P)=0$, then since $\phi(T,P)=\frac{\|[T,P]\|_2}{\|P\|_2}$
we must have
%
%
$[T,P]=0$. This shows that $T$ is finite block reducible.
Consider next the situation where $\phi(T,P)\not=0$ for all $P\in\cPfin(\cH)$. Since $T$
is finite we can use again Theorem~4 in \cite{Williams70} to conclude that $C^*(T,\1)$
has an amenable trace. Applying Proposition~\ref{pro:hypertrace} 
(see also Theorem~1.1 in \cite{Bedos95}) we conclude that there exists a F\o lner sequence
of non-zero finite rank projections $\{P_n\}_n$, i.e., we have
\[
 \lim_{n\to\infty} \phi(T,P_n) = 0\;.
\]
(Note that $P_n$ is not necessarily a proper
F\o lner sequence in the sense of Definition~\ref{def:Foelner}; cf.~Remark~\ref{rem:subtle}.)
Two cases may appear:
if $\mathrm{dim}\,P_n\cH\leq m$ for some $m\in\N$, then choose a subsequence with constant rank
and by Proposition~\ref{prop:splitting} we conclude that $T$ has a proper F\o lner sequence.
If $\mathrm{dim}\,P_n\cH$ is not bounded, then from Remark~\ref{rem:projections}~(i) we also
have that $T$ has a proper F\o lner sequence.
\end{proof}

\

From Theorem~\ref{teo:williams-foelner} and taking into account the classification
of operators described in Table~1 of the Introduction we have the following result.

\begin{corollary}\label{cor:yes-no}
Let $T\in\cL(\cH)$. Then
\begin{itemize}
 \item[(i)] $T$ is a finite operator if and only if $T$ is in one of the following mutually
disjoint classes: $\mathcal{W}_{0+}$, $\mathcal{W}_{0-}$, $\mathcal{W}_{1+}$.
 \item[(ii)] $T$ is not a finite operator (i.e., it is of class $\mathcal{S}$) if and only if $T$ is strongly non-F\o lner.
 \item[(iii)] The class of strongly non-F\o lner operators is open and dense in $\cL(\cH)$.
\end{itemize}
\end{corollary}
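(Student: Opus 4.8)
The plan is to obtain (i) by bookkeeping against Theorem~\ref{teo:williams-foelner} and the four-class partition of $\cL(\cH)$ recalled in the Introduction, to deduce (ii) from the F\o lner-sequence characterization of finiteness in Corollary~\ref{cor:fin=Fol_seq}, and to split (iii) into an elementary openness estimate and a genuine perturbation argument for density. For (i): the classes $\mathcal{W}_{0+},\mathcal{W}_{0-},\mathcal{W}_{1+},\mathcal{S}$ are exactly the cells of the partition of $\cL(\cH)$ determined by the two dichotomies ``finite block reducible or not'' and ``has a proper F\o lner sequence or not'', so that ``finite block reducible \emph{or} possessing a proper F\o lner sequence'' is precisely the pairwise disjoint union $\mathcal{W}_{0+}\cup\mathcal{W}_{0-}\cup\mathcal{W}_{1+}$; by Theorem~\ref{teo:williams-foelner} this union is the set of finite operators, which is (i).

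For (ii): by (i) an operator is non-finite exactly when it lies in $\mathcal{S}$, so it remains to see that $\mathcal{S}$ coincides with the class of strongly non-F\o lner operators. By Corollary~\ref{cor:fin=Fol_seq}, $T$ is non-finite iff $T$ has no F\o lner sequence at all, and so it suffices to check that $T$ has no F\o lner sequence iff $T$ is strongly non-F\o lner. One direction is immediate, since $\phi(T,P)\ge\eps$ for every $P\in\cPfin(\cH)$ prevents $\phi(T,\cdot)$ from tending to $0$ along any sequence in $\cPfin(\cH)$. For the other, if $T$ is not strongly non-F\o lner then for each $n$ there is $P_n\in\cPfin(\cH)$ with $\phi(T,P_n)<1/n$, and $\{P_n\}_n$ is then a F\o lner sequence for $T$ in the sense of Definition~\ref{def:Foelner}. (One can alternatively argue through Theorem~\ref{thm:str-foln}: absence of a proper F\o lner sequence forces a splitting $T=A\oplus\wt T$ with $\dim\cH_0<\infty$ and $\wt T$ strongly non-F\o lner; if $T$ is moreover not finite block reducible the summand must vanish, and conversely a strongly non-F\o lner operator is automatically not finite block reducible, because if $T=T_0\oplus T_1$ with $\cH_0\ne\{0\}$ finite-dimensional then $\1_{\cH_0}\oplus 0\in\cPfin(\cH)$ commutes with $T$, giving $\phi(T,\cdot)=0$ there.)

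For (iii), the openness part is a one-line estimate: if $T$ is strongly non-F\o lner with constant $\eps$ and $\|S-T\|<\eps/4$, then for every $P\in\cPfin(\cH)$
\[
\|[S,P]\|_2\ \ge\ \|[T,P]\|_2-\|[S-T,P]\|_2\ \ge\ \big(\eps-2\|S-T\|\big)\,\|P\|_2\ >\ \tfrac{\eps}{2}\,\|P\|_2,
\]
so $\phi(S,P)>\eps/2$ for all $P$ and hence $S$ is strongly non-F\o lner; thus the whole ball $B(T,\eps/4)$ is contained in the class.

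For density, by (ii) the task is to show that the set of finite operators --- which is norm-closed by Williams' theorem --- has empty interior, i.e.\ that every $T\in\cL(\cH)$ is a norm limit of non-finite operators. The basic input is the existence of a single $v$ with $C^*(v,\1)\cong\mathcal{O}_2$ (Proposition~\ref{no-Foelner}); the Cuntz relation forces any tracial state $\tau$ on $\mathcal{O}_2$ to satisfy $\tau(\1)=2\tau(\1)$, so $\mathcal{O}_2$ has no tracial state, hence no amenable trace, and $v$ is non-finite. Given $T$ and $\eps>0$ the strategy is to construct $S$ with $\|S-T\|<\eps$ for which $C^*(S,\1)$ still carries no amenable trace. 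A useful reduction is that if $\cK(\cH)\subseteq C^*(S,\1)$, then every tracial state of $C^*(S,\1)$ vanishes on $\cK(\cH)$ --- it assigns the same value to all rank-one projections, and summing $n$ orthogonal ones bounds that value by $1/n$ --- so it descends to a tracial state of the Calkin image $C^*(\pi(S),1)$; hence it is enough to exhibit, within $\eps$ of $T$, an operator $S$ with $\cK(\cH)\subseteq C^*(S,\1)$ whose Calkin image generates a traceless C*-algebra (for instance a copy of $\mathcal{O}_2$). Carrying out this last perturbation while keeping $\|S-T\|$ small is the main obstacle, and it is done in Section~\ref{sec:examples}.
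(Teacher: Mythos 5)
Parts (i) and (ii) of your argument are correct, and your route to (ii) --- negating Definition~\ref{def:strongNfol} to produce projections $P_n\in\cPfin(\cH)$ with $\phi(T,P_n)<1/n$ and then invoking Corollary~\ref{cor:fin=Fol_seq} --- is more self-contained than the paper's one-line appeal to Theorem~\ref{teo:williams-foelner} together with Williams' theorem. The openness estimate in (iii) is also correct, and is essentially the observation the paper records in the remark immediately following the corollary.

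The genuine gap is in the density half of (iii). You correctly reduce it to showing that every $T$ is a norm limit of operators $S$ such that $C^*(S,\1)$ carries no amenable trace, and your proposed mechanism (arrange $\cK(\cH)\subseteq C^*(S,\1)$ and make the Calkin image generate a traceless C*-algebra) is sound as far as it goes; but you then assert that the required perturbation ``is done in Section~\ref{sec:examples}.'' It is not: that section only exhibits \emph{particular} strongly non-F\o lner operators (Halmos' pair, Bunce's example, a single generator of $\cO_n$) and contains no approximation statement. Producing such an $S$ within $\eps$ of an \emph{arbitrary} $T$ is the hard content --- for instance, for $T$ normal one must destroy every tracial state on the generated Calkin subalgebra by an arbitrarily small perturbation --- and the paper does not prove it either; it quotes Herrero's theorem \cite{Herrero89} that the set of finite operators is closed and nowhere dense, which is precisely the missing ingredient. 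So you should either cite \cite{Herrero89} at this point or actually carry out the perturbation; as written, the density claim is unsupported.
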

\begin{proof}
The characterization of finite operators and its complement stated in (i) and (ii)
follows from Theorem~\ref{teo:williams-foelner}
and Williams' theorem at the end of Section~\ref{sec:foelner}.
To prove part (iii) we use that the class of finite
operators is closed and nowhere dense (cf.~\cite{Herrero89}).
Therefore the set of strongly non-F\o lner operators is an
open and dense subset of $\cL(\cH)$.
\end{proof}

\begin{remark}
\margp{NUEVO}
In fact, the assertion that the class of strongly non-F\o lner operators is open in the norm
topology follows easily from our definition of this class. Indeed, let $T$ be strongly
non-F\o lner operator, so that there is an $\eps>0$ such that
$\phi(T,P)\ge \eps$ for all $P\in \cPfin(\cH)$. It is easy
to see that $\big|\phi(T,P)-\phi(T',P)\big|\le 2 \|T-T'\|$ for any operator $T'$. Hence
any operator $T'$ with $\|T-T'\|< \eps/2$ is strongly non-F\o lner.
So an application of Corollary~\ref{cor:yes-no}
gives an alternative proof of the result by Williams \cite{Williams70}
that the set of finite operators is closed.
\end{remark}

\section{Classes of non-normal operators with a proper F\o lner sequence}\label{sec:with-FS}
%
%

In the present section we single out several classes of operators
such that any operator in these classes has a proper F\o lner sequence.
The unilateral shift is a basic example that shows the difference between
the notions of proper F\o lner sequences and quasidiagonality.
It is a well-known fact that the unilateral shift $S$
is not a quasidiagonal operator. (This was shown by Halmos in \cite{Halmos68}; in
fact, in this reference it is shown that $S$ is not even quasitriangular.)
In the setting of abstract C*-algebras it can also be shown that a C*-algebra
containing a proper (i.e.~non-unitary) isometry is not quasidiagonal
(see, e.g., \cite{BrownIn04,bBrown08}).

On the other hand, $S$ has a canonical proper F\o lner sequence. Indeed,
let $S$ be defined on $\cH:=\ell^2(\N_0)$ by $Se_i:=e_{i+1}$, where
$\{e_i\mid i=0,1,2,\dots\}$ is the canonical basis of
$\ell^2(\N_0)$. Then it is very easy to see that orthogonal projections
$P_n$ onto span$\{e_i\mid i=0,1,2,\dots, n\}$ form a proper F\o lner sequence for $S$.
We will see later in this section that, in fact, any isometry
has a proper F\o lner sequence.

We recall some standard definitions.
An operator $T\in \cL(\cH)$ is called
\textit{hyponormal} if its self-commutator $[A^*,A]$ is
nonnegative. $T$ is called \textit{essentially hyponormal}
if the image in the Calkin algebra $\cL(\cH)/\cK(\cH)$ of
$[T^*,T]$ is a nonnegative element, that is, if $[T^*,T]$ is a sum
of a nonnegative and a compact selfadjoint operator. Next,
$T$ is said to be \textit{essentially normal} if
$[T^*,T]$ is compact (that is, $[T^*,T]$ is zero as an element of the Calkin
algebra). Finally, $T$ is called \textit{quasinormal}
if $T$ and $T^*T$ commute.
Any isometry is quasinormal, any quasinormal operator is
subnormal and any subnormal operator is hyponormal,
see \cite[Chapter II]{bConway91}.

\begin{theorem}\label{main1}
Any essentially hyponormal operator $T\in\cL(\cH)$ has a proper F\o lner sequence.
\end{theorem}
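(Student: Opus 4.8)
The plan is to reduce the statement to the case of a genuinely hyponormal operator plus a compact perturbation, and then to exhibit a concrete F\o lner sequence using the structure theory of hyponormal operators. First I would observe that since $[T^*,T] = N + K$ with $N \ge 0$ and $K = K^*$ compact, and since (by Proposition~\ref{pro-1}(i)) having a proper F\o lner sequence is insensitive to compact perturbations, it would be natural to try to decompose $T$ itself as (hyponormal) $+$ (compact). But this is not automatic, so instead I would work with $T$ directly: the essential hyponormality says $\langle [T^*,T]x,x\rangle \ge -\langle K' x, x\rangle$ for a fixed compact $K'\ge 0$, which means $\|Tx\|^2 \ge \|T^*x\|^2 - \langle K' x,x\rangle$, i.e. $T$ fails to be hyponormal only by a compact, semidefinite amount.

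The core idea is the same one that works for the unilateral shift and, more generally, for any isometry: build the F\o lner projections $P_n$ as spectral projections adapted to a "Wold-type" or "wandering subspace" decomposition. Concretely, for a hyponormal $T$ one has the bound $\|[T,P]\|_2^2 \le \operatorname{Tr}(P[T^*,T]P)$ coming from the elementary identity $\|(I-P)TP\|_2^2 - \|(I-P)T^*P\|_2^2 = \operatorname{Tr}(P[T^*,T]P) - \operatorname{Tr}(PTP\,[\,\cdot\,])$; more cleanly, for any projection $P$, $\|(I-P)TP\|_2^2 = \|TP\|_2^2 - \|PTP\|_2^2$ and similarly with $T^*$, so
\[
\|(I-P)TP\|_2^2 - \|(I-P)T^*P\|_2^2 = \operatorname{Tr}\big(P[T^*,T]P\big).
\]
Hence $\|[T,P]\|_2^2 = \|(I-P)TP\|_2^2 + \|(I-P)T^*P\|_2^2 \le 2\|(I-P)T^*P\|_2^2 + \operatorname{Tr}(P[T^*,T]P)$. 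So it suffices to produce projections $P_n \nearrow \1$ with $\operatorname{rank} P_n \to \infty$ for which both $\operatorname{Tr}(P_n[T^*,T]P_n)/\operatorname{rank}P_n \to 0$ and $\|(I-P_n)T^*P_n\|_2^2/\operatorname{rank} P_n \to 0$. The second quantity is governed by how much "mass" $T^*$ moves out of $P_n\cH$; for a pure isometry this is literally a rank-one effect (the shift pushes out one dimension), and the plan is to emulate that. I would first handle the clean subcase where $T$ is hyponormal, using a functional-calculus / approximate-eigenspace construction — pick an increasing sequence of finite-dimensional subspaces that are nearly invariant under $T^*$ (for instance built from $\bigcup_k \Ker(T^*)^{\perp}$-type filtrations or from spectral subspaces of $|T|$) so that $(I-P_n)T^*P_n$ has rank bounded independently of $n$ while $\operatorname{rank}P_n\to\infty$; then \eqref{eq:F1} follows since a bounded-rank Hilbert–Schmidt norm divided by $\sqrt{\operatorname{rank}P_n}$ tends to $0$.

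The essentially-hyponormal case would then be obtained by absorbing the compact defect: write $[T^*,T] = N + K'$ with $N\ge 0$, $K'$ compact selfadjoint. For any $\eta > 0$, $K'$ is within trace-class-small (indeed finite-rank) distance of $0$ after truncating its spectrum; choosing $P_n$ to also nearly commute with the finite-rank part of $K'$ (possible since finite-rank operators are quasidiagonalized by any proper F\o lner sequence, Proposition~\ref{pro-1}(i)), the term $\operatorname{Tr}(P_n[T^*,T]P_n) = \operatorname{Tr}(P_n N P_n) + \operatorname{Tr}(P_n K' P_n)$ is controlled: the $K'$ part is $O(1) + o(\operatorname{rank}P_n)$ and the $N\ge 0$ part is handled exactly as in the hyponormal case because the key inequality $\|[T,P]\|_2^2 \le 2\|(I-P)T^*P\|_2^2 + \operatorname{Tr}(P N P) + \operatorname{Tr}(PK'P)$ still isolates a positive "defect" plus a compact error. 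I expect the main obstacle to be the construction of the subspaces $P_n\cH$ with the simultaneous properties "$(I-P_n)T^*P_n$ has small Hilbert–Schmidt norm relative to $\sqrt{\operatorname{rank}P_n}$" and "$P_n\nearrow\1$": for a general hyponormal operator one cannot expect literally bounded rank, so the real work is a quantitative argument showing that one can choose $P_n$ so that $\operatorname{Tr}(P_n N P_n)$ and $\|(I-P_n)T^*P_n\|_2^2$ grow sublinearly in $\operatorname{rank}P_n$. Here I would lean on the fact that $T^*T$ and $TT^*$ differ by $N+K'$ together with a pigeonhole/averaging argument over a long increasing chain of spectral subspaces of $|T|$, in the spirit of the classical amenability-type estimate, to force the averaged defect to vanish; this averaging step, and checking it interacts correctly with the compact term, is where the essential care is needed.
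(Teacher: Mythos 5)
Your reduction is correct as far as it goes: the identity
\[
\|[T,P]\|_2^2=\|(\1-P)TP\|_2^2+\|(\1-P)T^*P\|_2^2=2\,\|(\1-P)T^*P\|_2^2+\Tr\bigl(P[T^*,T]P\bigr)
\]
is valid, and it does reduce the theorem to producing projections $P_n\nearrow\1$ with $\|(\1-P_n)T^*P_n\|_2^2=o(\rank P_n)$ and $\Tr(P_n[T^*,T]P_n)=o(\rank P_n)$. But that is where the proof stops: the construction of such $P_n$ is never carried out, and it is precisely the mathematical content of the theorem. The two devices you gesture at do not work as stated. First, the claim that for a hyponormal $T$ one can choose $P_n$ so that $(\1-P_n)T^*P_n$ has \emph{bounded rank} is unjustified and, as you yourself concede, false in general; what is true (via the Apostol--Foia\c{s}--Voiculescu theorem, since hyponormality forces $\mathrm{ind}(T-\lambda)\le 0$, hence $T^*$ is quasitriangular) is that one can get $\|(\1-P_n)T^*P_n\|\to 0$, which does control the first term since $\|(\1-P_n)T^*P_n\|_2^2\le\rank(P_n)\,\|(\1-P_n)T^*P_n\|^2$. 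Second, and more seriously, the positive term $\Tr(P_nNP_n)$ with $N=[T^*,T]\ge 0$ admits only the linear bound $\|N\|\rank P_n$ unless the $P_n$ are chosen to average $N$ down; $N$ need not be compact or trace class (e.g.\ for an infinite orthogonal sum of unilateral shifts it is an infinite-rank projection), so sublinearity is a genuine constraint on the choice of $P_n$, and your ``pigeonhole/averaging over spectral subspaces of $|T|$'' is not an argument (those spectral subspaces are typically infinite dimensional, and nothing ties them to quasitriangularizing projections for $T^*$). In effect the proposal restates the problem in the form ``such projections exist'' without proving it.

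For comparison, the paper's proof is entirely soft and takes a different route: by Williams' Theorem~5 in \cite{Williams70}, every essentially hyponormal operator is a \emph{finite} operator; by Theorem~\ref{teo:williams-foelner} a finite operator either is finite block reducible or has a proper F\o lner sequence; a short case analysis on the supremum of dimensions of finite-dimensional reducing subspaces (splitting off a maximal one and applying Proposition~\ref{prop:decomp}/Proposition~\ref{local-formulation}) then finishes the argument. The hard analytic work is thus outsourced to Williams' numerical-range argument and to the hypertrace-to-F\o lner-sequence machinery behind Proposition~\ref{pro:hypertrace}, which is exactly the ``averaging'' step your sketch leaves open. If you want a self-contained quantitative proof along your lines, you would essentially have to reprove that machinery; as written, the proposal has a genuine gap at its central step.
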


\begin{proof}
Let $T$ be essentially hyponormal.
By \cite[Theorem~5]{Williams70} and the discussion that follows this theorem,
$T$ is a finite operator. Consider all finite-dimensional reducing subspaces $\cH_0$ of $T$ (including
the zero one).

There are two possibilities.

1) Suppose that among these subspaces there is one of largest dimension, say, $\cH_0$, and $T=T_0\oplus T_1$ with
respect to the corresponding decomposition $\cH=\cH_0\oplus\cH_0^\perp$.
Then $T_1$ is essentially hyponormal and not finite block reducible.
By Theorem~\ref{teo:williams-foelner}, $T_1$ has a proper F\o lner sequence.
Therefore $T$ also has a proper F\o lner sequence.

2) Now suppose that, to the opposite, the dimensions of these
subspaces can be arbitrarily large.
Then we deduce from Proposition~\ref{local-formulation} that
in this case, too, $T$ has a proper F\o lner sequence.
\end{proof}


\begin{corollary}\label{cor:3classes}
Every essentially normal operator
(that is, an operator $T$ such that $[T^*,T]\in \cK(\cH)$)
has a proper F\o lner sequence.
Every hyponormal operator (in particular, any subnormal, any quasinormal and any isometry)
also has a proper F\o lner sequence.
\end{corollary}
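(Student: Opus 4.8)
The plan is to deduce this corollary directly from Theorem~\ref{main1}, since each of the listed classes of operators is contained in the class of essentially hyponormal operators; so the only work is to check the (elementary) inclusions between the relevant operator classes.

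First I would observe that an essentially normal operator is essentially hyponormal: if $[T^*,T]\in\cK(\cH)$, then the image of $[T^*,T]$ in the Calkin algebra $\cL(\cH)/\cK(\cH)$ is $0$, which is certainly a nonnegative element. Hence Theorem~\ref{main1} applies and such a $T$ has a proper F\o lner sequence.

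Next, a hyponormal operator $T$ satisfies $[T^*,T]\ge 0$, so a fortiori the image of $[T^*,T]$ in the Calkin algebra is nonnegative; thus $T$ is essentially hyponormal, and Theorem~\ref{main1} again yields a proper F\o lner sequence. Finally, recall the chain of implications noted earlier: every isometry is quasinormal, every quasinormal operator is subnormal, and every subnormal operator is hyponormal (see \cite[Chapter~II]{bConway91}). Therefore each operator in any of these subclasses is in particular hyponormal, and the conclusion follows from the hyponormal case just treated.

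I do not anticipate any genuine obstacle here: the argument is entirely a matter of unwinding definitions and citing the inclusions between operator classes, after which everything reduces to Theorem~\ref{main1}. One could alternatively note that the isometry case also follows from the explicit proper F\o lner sequence exhibited for the unilateral shift earlier in this section together with the Wold decomposition (which writes any isometry as a direct sum of copies of the shift and a unitary, the latter being normal and hence handled by the known results recalled in the Introduction), but invoking Theorem~\ref{main1} is the shortest route and covers all cases uniformly.
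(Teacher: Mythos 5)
Your proposal is correct and matches the paper's (implicit) argument exactly: the corollary is deduced from Theorem~\ref{main1} by noting that essentially normal and hyponormal operators are essentially hyponormal, together with the chain isometry $\Rightarrow$ quasinormal $\Rightarrow$ subnormal $\Rightarrow$ hyponormal already recalled in the text. The alternative direct arguments you mention (Wold decomposition for isometries, etc.) are likewise the ones the paper records in the remark following the corollary.
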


\begin{remark}
For some of the above operator classes, one can give alternative direct proofs.
\begin{enumerate}
\item[(i)]
Any isometry $V\in\cL(\cH)$ has a proper F\o lner sequence. Indeed,
without loss of generality we may assume that $V$ is not unitary.
By Wold's decomposition theorem (cf.~\cite[Section~V.2]{bDavidson96}) we have that
$
 V\cong S\oplus A\;,
$
where $\cong$ means unitary equivalence, $S$ is the unilateral shift and
$A=\big(\mathop{\oplus}\limits_{i=0}^n S\big)\oplus U$ for some cardinal number $n$ and unitary $U$.
Since we showed above that $S$ has a canonical
proper F\o lner sequence we can apply Proposition~\ref{localization} and the proof is concluded.

\item[(ii)]
Any quasinormal operator has a proper F\o lner sequence.
To see that, notice first that by Brown's theorem (see \cite[Theorem~II.3.2]{bConway91}), any
quasinormal operator is unitarily equivalent to $N\oplus (A\otimes S)$, where
$N$ is normal, $A$ is non-negative and $S$ is the unilateral shift.
Next, as we mentioned already, $S$ has an explicit proper F\o lner sequence and since $A$
is self-adjoint it has a proper F\o lner sequence too.
By Proposition~\ref{tens-prods}, $A\otimes S$ has a proper F\o lner sequence.
Finally, Proposition~\ref{localization} implies that $N\oplus (A\otimes S)$ has
a proper F\o lner sequence.

\item[(iii)]
One can also give an alternative proof of the fact that
any essentially normal operator has a proper F\o lner sequence by applying the
Brown--Douglas--Fillmore theory (see \cite{BrownIn73,Davidson-ess}) and a model
for essentially normal operators, similar to that given on p.~122 of the cited work.
\end{enumerate}
\end{remark}




The next result refers to the existence of a proper F\o lner sequence for the class
of Toeplitz operators on the $d$-dimensional torus. Denote the
unit torus by $\T=\{z\in \C\mid |z|=1\}$.
We also recall that, given a function
$F\in L^\infty(\T^d)$, the Toeplitz operator $T_F$ on
the classical Hardy space $H^2(\T^d)$ is defined by
$T_F g=P_+(F\cdot g)$, $g\in H^2(\T^d)$,
where $P_+$ stands for the orthogonal projection from $L^2(\T^d)$ onto $H^2(\T^d)$.
Note that even for $d=1$, there are Toeplitz operators which are not essentially normal
(for instance, $T_\theta$ for any non-rational inner function $\theta$).
Using the same idea, it is easy to give an example of a Toeplitz operator $T_F$ on
$H^2$ such that neither $T_F$ nor $T_F^*$ is essentially hyponormal.

It is an easy consequence of Proposition~\ref{tens-prods} and Weierstra\ss' theorem that
any Toeplitz operator with continuous symbol on $\T^d$ has a proper F\o lner sequence.
The following more general result is also true. Its proof is similar
to the one given in Example~7.17 of \cite{bHagen01}, where only the case $d=1$ was considered.
For convenience of the reader give a brief sketch of the proof.

\begin{proposition}
Any Toeplitz operator $T_F$ on $H^2(\T^d)$ with any symbol $F$ in $L^\infty(\T^d)$
has a proper F\o lner sequence.
\end{proposition}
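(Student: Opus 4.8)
The plan is to produce a single explicit sequence of projections that is a proper F\o lner sequence for \emph{every} symbol at once, by estimating the off-diagonal blocks of $T_F$ directly in terms of the Fourier coefficients of $F$. First I would fix, for $N\in\N$, the orthogonal projection $P_N$ of $H^2(\T^d)$ onto the span of the monomials $z^\alpha:=z_1^{\alpha_1}\cdots z_d^{\alpha_d}$ with $\alpha\in Q_N:=\{0,1,\dots,N\}^d$. Then $\{P_N\}_N$ is increasing, converges strongly to $\1$, and $\|P_N\|_2^2=\rank P_N=(N+1)^d$, so by Definition~\ref{def:Foelner} it suffices to check that $\phi(T_F,P_N)\to0$.

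The core computation I would carry out is the following. In the monomial basis the entries of $T_F$ are Fourier coefficients, $\langle T_Fz^\alpha,z^\beta\rangle=\langle Fz^\alpha,z^\beta\rangle_{L^2(\T^d)}=\widehat F(\beta-\alpha)$ for $\alpha,\beta\in\N_0^d$. Writing $[T_F,P_N]$ as the sum of its two off-diagonal blocks $(\1-P_N)T_FP_N$ and $P_NT_F(\1-P_N)$ (as already noted in the Introduction), and using $T_F^{*}=T_{\bar F}$ together with $\|\bar F\|_{L^2}=\|F\|_{L^2}$, it is enough to bound the first one. From the entry formula,
\[
\|(\1-P_N)T_FP_N\|_2^{2}=\sum_{\alpha\in Q_N}\sum_{\beta\in\N_0^d\setminus Q_N}|\widehat F(\beta-\alpha)|^{2}=\sum_{\gamma\in\Z^d}|\widehat F(\gamma)|^{2}\,n_\gamma(N),
\]
where $n_\gamma(N):=\#\{\alpha\in Q_N:\alpha+\gamma\in\N_0^d\setminus Q_N\}$. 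The decisive elementary point is a lattice-point count: for fixed $\gamma$, an $\alpha\in Q_N$ with $\alpha+\gamma\ge0$ but $\alpha+\gamma\notin Q_N$ must have some coordinate $\alpha_j>N-\gamma_j$, hence $n_\gamma(N)\le d\max_j|\gamma_j|\,(N+1)^{d-1}$ for $N$ large; so $n_\gamma(N)/(N+1)^d\to0$ for each fixed $\gamma$, while always $0\le n_\gamma(N)/(N+1)^d\le1$. Since $\sum_\gamma|\widehat F(\gamma)|^{2}=\|F\|_{L^2}^2\le\|F\|_\infty^2<\infty$, dominated convergence applied to the sum over $\gamma$ yields $(N+1)^{-d}\|(\1-P_N)T_FP_N\|_2^2\to0$, and the same for the second block with $\bar F$ in place of $F$. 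Therefore $\phi(T_F,P_N)\to0$, so $\{P_N\}_N$ is a proper F\o lner sequence for $T_F$ — in fact a simultaneous one for $C^{*}\big(\{T_G:G\in L^\infty(\T^d)\}\big)$.

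I do not expect a real obstacle; the only thing to watch is the bookkeeping of which indices $\beta=\alpha+\gamma$ actually lie in $\N_0^d\setminus Q_N$, so that the double sum, its rearrangement in $\gamma$, and the dominated-convergence step are all legitimate. As a cross-check, for a \emph{continuous} symbol $F$ the statement is softer: picking trigonometric polynomials $p_k\to F$ uniformly on $\T^d$ (Weierstra\ss/Fej\'er), one has $\|T_F-T_{p_k}\|\le\|F-p_k\|_\infty$, and each $T_{p_k}$ lies in $C^{*}(S_1,\dots,S_d,\1)$, for which $\{P_N\}_N$ is a proper F\o lner sequence by Proposition~\ref{tens-prods} and Proposition~\ref{pro-1}(i); hence $\limsup_N\phi(T_F,P_N)\le\limsup_N\phi(T_{p_k},P_N)+2\|F-p_k\|_\infty=2\|F-p_k\|_\infty\to0$. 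The role of the direct Fourier-coefficient estimate above is precisely to bypass this uniform approximation, which is unavailable for a general $F\in L^\infty(\T^d)$.
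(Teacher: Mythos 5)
Your proof is correct and follows essentially the same route as the paper: the same projections onto spans of monomials, the same computation of the off-diagonal Hilbert--Schmidt blocks in terms of the Fourier coefficients of $F$ (and of $\bar F$ for the adjoint block), and the same conclusion that the normalized block norms vanish. The only difference is cosmetic: you finish the limit by dominated convergence over $\gamma$, whereas the paper carries out the equivalent estimate by hand with an explicit splitting of the index sum at the scale $b_N\approx\sqrt N$.
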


\begin{proof}
It is enough to consider the proof for $d=2$ and other cases follow similarly.
For any $F\in L^\infty(\T^2)$ we consider its decomposition
\begin{equation}\label{Fnorm}
F=\sum_{k_1,k_2\in\Z} a_{k_1k_2} \; e_{k_1 k_2}
 \quad\mathrm{and}\quad
\|F\|^2= \sum_{k_1,k_2\in\Z} |a_{k_1k_2}|^2 < \infty\;.
\end{equation}
where $\{e_{k_1 k_2}\}$ is the
canonical basis of $L^2(\T^2)$:
$e_{k_1 k_2}(z_1,z_2)=z_1^{k_1}z_2^{k_2}$.
Denote by $P_N$ the orthogonal projection onto $\mathrm{span}\,\{e_{k_1 k_2}\mid k_1,k_2=0\,,\dots\,, N-1\}$
and note that $\|P_N\|_2^2=N^2$. If we choose $b_N$ to be the smallest integer greater or equal than
$\sqrt{N}$, we have
\begin{eqnarray*}
\lefteqn{\frac{1}{N^2}\;
         \|(\1-P_N)\,T_F\, P_N\|_2^2 }\\[2mm]
&\le & \frac{1}{N^2}\,
\sum\limits_{\mbox{\tiny $\begin{array}{c}
                             l_1,l_2=0,\dots, N-1  \\
                             k_1\geq N, k_2\geq 0\\
                          \end{array}$}} \!\!\!\left| a_{k_1-l_1\,,\,k_2-l_2}\right|^2
\;\;+
\frac{1}{N^2}\,
\sum\limits_{\mbox{\tiny $\begin{array}{c}
                             l_1,l_2=0,\dots, N-1  \\
                             k_1\geq 0, k_2\geq N
                          \end{array}$}} \!\!\!\left| a_{k_1-l_1\,,\,k_2-l_2}\right|^2
=:A_1+A_2.
\end{eqnarray*}
Next, putting $s_j=k_j-l_j$, we get
\begin{eqnarray*}
A_1&\leq&
\frac{1}{N}\bigg(
\sum\limits_{s_1\geq 1\,,\, s_2\in\Z}
             \!\!\!\left| a_{s_1s_2}\right|^2
+\dots+
\sum\limits_{s_1\geq N\,,\, s_2\in\Z}
             \!\!\!\left| a_{s_1s_2}\right|^2
\bigg) \\[4mm]
&\leq&
\frac{1}{N}\bigg(
(N-b_N)\sum\limits_{s_1> b_N\,,\, s_2\in\Z}
             \!\!\!\left| a_{s_1s_2}\right|^2
+
b_N \sum\limits_{s_1\geq 1\,,\, s_2\in\Z}
             \!\!\!\left| a_{s_1s_2}\right|^2
\bigg) \\[4mm]
&\leq&
\sum\limits_{s_1> b_N\,,\, s_2\in\Z}
             \!\!\!\left| a_{s_1s_2}\right|^2
+\frac{b_N}{N}\,\|F\|^2\quad \mathop{\longrightarrow}\limits_{N\to\infty}\quad 0\;
\end{eqnarray*}
(see \eqref{Fnorm}). Similarly,
$A_2\to 0$ as $N\to\infty$. We get that
$\frac{1}{N^2}\, \|(\1-P_N)\,T_F\, P_N\|_2^2\to 0$. In the same way one proves that
$\frac{1}{N^2}\, \|P_N\,T_F\, (\1-P_N)\|_2^2\to 0$ as $N\to\infty$.
\end{proof}

In particular, it follows that any Toeplitz operator on $H^2(\T^d)$ is finite.

\medskip

For completeness we mention that all band-limited operators and uniform limits
of them have a proper F\o lner sequence. Recall that
a linear operator $A$ on $\cH$ is \textit{band-limited}
with respect to an orthonormal basis
$\{e_n\}_{n=1}^\infty$ or $\{e_n\}_{n=-\infty}^\infty$ in $\cH$
if there is $N\in \N$ such that the matrix elements of $A$ satisfy
$\langle Ae_j, e_k \rangle = 0$ for $|j-k|>N$
(see, e.g., \cite{Arveson94,Bedos97,pRabinovich10}). This class of operators
includes all (bounded) unilateral and bilateral weighted shifts.

Notice that
not every weighted shift is essentially normal
and not every weighted shift is quasi-diagonal. A complete description
of quasi-diagonal weighted shifts was given in \cite{Smucker82}
(see also \cite{Narayan92} for a generalization).

It is easy to see that band-limited operators can be generalized to
what we can call \textit{``acute wedge'' operators}.
%
%
%
%
By definition, $A$ has this property with respect to an orthonormal basis $\{e_n\}_{n=1}^\infty$
(or $\{e_n\}_{n=-\infty}^\infty$) in $\cH$ if there exists a
function $g(n)$ such that
$\lim_{|n|\to\infty} \frac {g(n)} {|n|^{1/2}} =0$ and $\langle Ae_j, e_k \rangle = 0$ for all $j, k$ such that $|j-k|>g(j)$.



\section{Examples of strongly non-F\o lner operators}\label{sec:examples}

\

Returning to Table~1 of the Introduction,
it easy to give examples of operators of class
$\cW_{0+}$. Next, it is immediate to see that the unilateral shift is an example of an operator in the class $\cW_{1+}$.
In this Section, we will recall several examples of operators of class $\cS$  (that is, strongly non-F\o lner operators)
and will give a new example. We remark that for any strongly non-F\o lner operator
$T$ and any operator $T_0$ acting on a non-zero finite dimensional Hilbert space, the orthogonal sum
$T_0\oplus T$ is an example of an operator in $\cW_{0-}$.

%
Halmos constructed in \cite[Theorem~5]{Halmos54} two operators $A,B\in\cL(\cH)$,
$\cH$ infinite dimensional such that $W([A,B])$ is a vertical line segment in the open right half plane.
It follows that $A$ and $B$ cannot be finite, hence both are examples of operators
which are strongly non-F\o lner.
In fact, this result was a motivational example for Williams' article \cite{Williams70}.

It is also worth mentioning that Corollary~4 in
\cite{Bunce76} gives an example of a
strongly non-F\o lner operator generating a type~$II_1$ factor.

Now let us give one more example. We will use the amenable trace that appears in
Proposition~\ref{pro:hypertrace} as an obstruction.
Recall the definition of the
Cuntz algebra $\cO_n$ (cf.~\cite{Cuntz77,bDavidson96}).
It is a C*-algebra generated by $n\geq 2$
non-unitary isometries $S_1,\ldots,S_n$ on an infinite
dimensional Hilbert space, with the property that their final
projections add up to the identity, i.e.
\begin{equation}\label{RangeProj1}
 \sum_{k=1}^{n} S_k S_k^*=\1\,.
\end{equation}
This condition implies in particular that the range projections
are pairwise orthogonal, i.e.
\begin{equation}
\label{Cuntz}
 S_l^* S_k=\delta_{lk}\1\,.
\end{equation}
The Cuntz algebra $\cO_n$ is the universal C*-algebra generated
by isometries satisfying these relations. It is easy to
realize the Cuntz algebra on the Hilbert space $\ell_2$.

\begin{proposition}\label{no-Foelner}
The Cuntz algebra $\cO_n$, $n\ge 2$, is singly generated and its generator
is strongly non-F\o lner.
\end{proposition}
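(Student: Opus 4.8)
The plan is to split the claim into two parts: (a) $\cO_n$ is singly generated as a C*-algebra, and (b) any single generator of $\cO_n$ is strongly non-F\o lner. For part (a) I would recall (or reproduce) the known construction showing that the Cuntz algebra is singly generated: one exhibits a single element $Z\in\cO_n$ from which all the generating isometries $S_1,\dots,S_n$ can be recovered by taking polynomials in $Z$ and $Z^*$. A convenient choice is something like $Z=\sum_{k=1}^n k\,S_kS_k^*\,S_1$ or, more robustly, $Z=S_1 D$ where $D=\sum_{k=1}^n\alpha_k S_kS_k^*$ is a positive invertible element with distinct eigenvalues $\alpha_k$; then $Z^*Z=D^2=\sum\alpha_k^2 S_kS_k^*$ lies in the diagonal abelian subalgebra and functional calculus applied to $Z^*Z$ recovers each range projection $S_kS_k^*$, after which $S_k=(S_kS_k^*)\,S_1\,D^{-1}(\dots)$ can be extracted. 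The precise algebra here is a routine verification using the Cuntz relations \eqref{RangeProj1}, \eqref{Cuntz}; I would state the generator explicitly and check that $C^*(Z,\1)=\cO_n$.

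For part (b), the key structural input is Corollary~\ref{cor:fin=Fol_seq} together with Proposition~\ref{pro-1}(i): if $T$ is a single generator of $\cO_n$, then $T$ has a F\o lner sequence if and only if $C^*(T,\1)=\cO_n$ has a F\o lner sequence, which by Proposition~\ref{pro:hypertrace} happens if and only if $\cO_n$ has an amenable trace. But an amenable trace is in particular a trace (a tracial state) on $\cO_n$, and $\cO_n$ is a simple C*-algebra with no tracial state — indeed, from \eqref{RangeProj1} and \eqref{Cuntz} one computes for any tracial state $\tau$ that $1=\tau(\1)=\sum_{k=1}^n\tau(S_kS_k^*)=\sum_{k=1}^n\tau(S_k^*S_k)=\sum_{k=1}^n\tau(\1)=n$, which is absurd since $n\ge 2$. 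Hence $\cO_n$ has no tracial state, a fortiori no amenable trace, so $T$ is not a finite operator; that is, $T\in\cS$, which by Corollary~\ref{cor:yes-no}(ii) means precisely that $T$ is strongly non-F\o lner.

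The only genuine obstacle is the bookkeeping in part (a): writing down a single element of $\cO_n$ and rigorously extracting all $n$ isometries from it via functional calculus and algebraic manipulation. This is standard but needs care to make sure the chosen generator actually works (e.g.\ that the relevant element of the diagonal has distinct spectral values so that the range projections are separated by continuous functions). Part (b) is then essentially immediate from the already-established machinery, the single nontrivial observation being the trace obstruction, which is the classical computation showing $\cO_n$ is traceless. I would present part (a) first, then invoke the chain Proposition~\ref{pro-1}(i) $\to$ Proposition~\ref{pro:hypertrace} $\to$ traceless-ness of $\cO_n$ $\to$ Corollary~\ref{cor:yes-no}(ii) to finish.
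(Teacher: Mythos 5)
Your part (b) is exactly the paper's argument: a F\o lner sequence for a generator $T$ would give one for $C^*(T,\1)=\cO_n$, hence an amenable trace $\tau$, and applying $\tau$ to \eqref{RangeProj1} and \eqref{Cuntz} yields $n=\sum_k\tau(S_kS_k^*)=\sum_k\tau(S_k^*S_k)=1$, a contradiction; then Corollary~\ref{cor:yes-no}(ii) upgrades ``not finite'' to ``strongly non-F\o lner''. That half is complete and coincides with the paper's proof.

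The gap is in part (a). The paper does not construct a generator; it simply cites Corollary~4 (or Theorem~9) of Olsen--Zame \cite{Olsen76} for the single generation of $\cO_n$. Your proposed explicit generator does not work. Both candidates you write down lie in the $*$-algebra generated by $S_1$ and the range projections: indeed $\sum_k k\,S_kS_k^*S_1=S_1$ by \eqref{Cuntz}, and for $Z=S_1D$ with $D=\sum_k\alpha_kS_kS_k^*$ positive invertible you correctly get $Z^*Z=D^2$, so that functional calculus and the polar decomposition recover exactly $S_1=Z(Z^*Z)^{-1/2}$ and the projections $S_kS_k^*$ --- and nothing more. Thus $C^*(Z,\1)=C^*\bigl(S_1,\,S_1S_1^*,\dots,S_nS_n^*,\,\1\bigr)$, and this is a \emph{proper} subalgebra of $\cO_n$: for $|z|=1$, $z\ne1$, the automorphism of $\cO_n$ determined by $S_2\mapsto zS_2$, $S_k\mapsto S_k$ ($k\ne2$) fixes $S_1$ and every $S_kS_k^*$, hence fixes $C^*(Z,\1)$ pointwise, yet moves $S_2$; so $S_2\notin C^*(Z,\1)$. (For $n=2$ one sees it even more directly: $C^*(S_1,\1)$ is the Toeplitz algebra by Coburn's theorem, which is not simple, while $\cO_2$ is.) The step ``$S_k=(S_kS_k^*)S_1D^{-1}(\dots)$'' cannot be completed, because $S_kS_k^*S_1=0$ for $k\ge2$. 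To repair part (a) you should either quote the Olsen--Zame result, as the paper does, or use a genuinely different generator whose polar data sees the partial isometries $S_iS_j^*$ with $i\ne j$, not only the diagonal projections.
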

\begin{proof}
By Corollary~4 (or Theorem~9) in \cite{Olsen76} any Cuntz algebra
$\cO_n$, $n\geq 2$, has a single generator $C_n$, i.e.,
$\cO_n=C^*(C_n)$. We assert that $C_n$ is strongly non-F\o lner.
\margp{He reducido la prueba; comprobarlo}
Indeed, assume that, to the contrary, it is not; then
by Corollary~\ref{cor:yes-no} (ii), $C_n$ is finite. By
Corollary~\ref{cor:fin=Fol_seq},
it would follow that $\cO_n=C^*(C_n)$ has an amenable trace $\tau$.
But this gives a contradiction since applying $\tau$ to the equations
\eqref{RangeProj1} and \eqref{Cuntz}
we obtain $n=1$.
%
%
\end{proof}

%
\begin{corollary}
There exist invertible and contractible strongly non-F\o lner operators.
\end{corollary}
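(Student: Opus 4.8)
The plan is to deduce this immediately from Proposition~\ref{no-Foelner} together with the fact that the quantity $\phi(\cdot,P)$ transforms in a completely controlled way under affine transformations of the operator. First I would record the elementary identity
\[
 \phi(sT+\mu\1,P)=|s|\,\phi(T,P),\qquad T\in\cL(\cH),\ s,\mu\in\C,\ P\in\cPfin(\cH),
\]
which follows at once from $[sT+\mu\1,P]=s[T,P]$. In particular, if $T$ is strongly non-F\o lner with constant $\eps>0$ (i.e.\ $\phi(T,P)\ge\eps$ for all $P\in\cPfin(\cH)$), then $sT+\mu\1$ is strongly non-F\o lner with constant $|s|\,\eps$ for every $s\ne 0$ and every $\mu\in\C$.

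Next I would apply this to the strongly non-F\o lner generator $C_n$ of the Cuntz algebra $\cO_n$ produced in Proposition~\ref{no-Foelner}. Since the spectrum $\sigma(C_n)$ is contained in the disk of radius $\|C_n\|$, the point $-\mu_0$ with $\mu_0:=\|C_n\|+1$ does not belong to $\sigma(C_n)$, so $C_n+\mu_0\1$ is invertible in $\cL(\cH)$. For $s>0$ the operator $A_s:=s\,(C_n+\mu_0\1)$ is then again invertible, is strongly non-F\o lner by the previous paragraph, and satisfies $\|A_s\|\le s\,(\|C_n\|+\mu_0)=s\,(2\|C_n\|+1)$. Choosing any $s$ with $0<s<(2\|C_n\|+1)^{-1}$ gives $\|A_s\|<1$. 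Hence $A_s$ is an invertible strict contraction (in particular $\|A_s\|\le 1$) which is strongly non-F\o lner, proving the corollary.

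As an alternative route that avoids any explicit construction, one may observe that the set of invertible operators of norm $<1$ is a non-empty open subset of $\cL(\cH)$ (invertibility and $\|\cdot\|<1$ are both open conditions, and $\tfrac12\1$ lies in this set), while by Corollary~\ref{cor:yes-no}(iii) the strongly non-F\o lner operators form a dense subset of $\cL(\cH)$; a dense set meets every non-empty open set, which yields an invertible strict contraction that is strongly non-F\o lner. I do not expect any genuine obstacle here: the only point worth checking is that scaling the operator down by a small factor $s$ does not destroy the strong non-F\o lner property, and it does not, precisely because $\phi(sT,P)=s\,\phi(T,P)$ remains bounded away from $0$.
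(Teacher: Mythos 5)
Your proof is correct and follows essentially the same route as the paper: the key point in both is the identity $\phi(\lambda T+\mu\1,P)=|\lambda|\,\phi(T,P)$ (valid for $\lambda\ne 0$), which shows that the strongly non-F\o lner property is preserved under non-degenerate affine transformations, applied to the strongly non-F\o lner generator from Proposition~\ref{no-Foelner}. Your secondary density argument via Corollary~\ref{cor:yes-no}~(iii) is also valid, but the main construction is the one the paper itself uses.
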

\begin{proof}
From the previous examples of strongly non-F\o lner operators we can easily construct
invertible or contractible strongly non-F\o lner operators. Just note that
for any two complex constants $\lambda \ne 0$
and $\mu$, an operator $T$ is
strongly non-F\o lner if and only if the operator $\lambda T + \mu \1$ is.
\end{proof}

It is also easy to see that for any operator $T$ that has no F\o lner sequence,
there is no orthogonal basis $\{e_n\}_{n=1}^\infty$
(nor $\{e_n\}_{n=-\infty}^\infty$) in $\cH$ such that $T$ is an ``acute wedge'' operator
%
%
with respect to this basis (see the end of \S \ref{sec:with-FS}
for a definition). In \cite{pLledoYakubovich12}, we will discuss
these operators in more detail.

\medskip

The above results allow one to present the classification of Table~1 in the
following, more detailed way. Let $T\in \cL(\cH)$, and
\margp{nuevo parche}
put
\[
\ell(T):=\sup_{R\subset \cH}\; \dim R,
\]
where the supremum is taken over
all finite dimensional reducing subspaces $R$ of $T$ (including the zero one).

In the case when $\ell(T)$ is finite,
there exists a unique reducing subspace
$\cH_0$ of $T$ of dimension
$\ell(T)$ (because $R_1+R_2$ is a reducing subspace of $T$
whenever $R_1$ and $R_2$ are). In this case, $T$ decomposes as
$T=T_0\oplus T_1$ with respect to the
orthogonal sum representation $\cH=\cH_0\oplus \cH_0^\perp$.
If $0\le \ell(T)<\infty$, put
\[
\eps(T):= \inf_{P\in \cPfin( \cH_0^\perp)} \phi(T_1,P).
\]
Then we have the following cases.

\vskip.3cm
\textit{Case 1: $0\le \ell(T)<\infty$.}
For these operators, Table~1
now looks as follows:
\begin{table}[h]
\centering 
  \begin{tabular}{| c || c | c | }
    \hline
    \phantom{phantom}   &  Operators with a proper   &    Operators with no proper \\[0.5ex]
    \phantom{phantom}   &  F\o lner sequence   ($\eps(T)=0$)      &    F\o lner sequence ($\eps(T)>0$)  \\
                           \hline\hline
Finite block reducible             &                            &                          \\ 
($0 < \ell(T)<\infty$)     &   \raisebox{6pt}{$\mathcal{W}_{0+}$}       &     \raisebox{6pt}{$\mathcal{W}_{0-}$}    \\ 
\hline
Non finite block reducible &                         &       \\
($\ell(T)=0$)   &  \raisebox{6pt}{$\mathcal{W}_{1+}$}    & \raisebox{6pt}{$\mathcal{S}$}     
\\ 
    \hline
  \end{tabular}
\vskip.3cm
 \caption{} 
\end{table}


\textit{Case 2: $\ell(T)=\infty$.}
In this case, $\eps(T)$ is not defined. These are exactly block diagonal operators, which
are operators that can be decomposed into an infinite orthogonal sum of finite dimensional
operators (see, e.g., \cite[Chapter 16]{bBrown08} or \cite{Voiculescu93}).
All these operators belong to $\mathcal{W}_{0+}$.

\section{Appendix: Single generators for C*-algebras}

In the questions we are discussing, singly generated
C*-algebras of operators have some relevance. See, e.g.,
the proof of Proposition~\ref{no-Foelner}, where it was crucial
that the Cuntz algebra is singly generated. Moreover,
from Theorem~\ref{main1} and Proposition~\ref{pro-1}~(i), any
$C^*$-algebra generated by an essentially normal operator has a proper F\o lner sequence.
However, we do not know in general whether any separable C*-subalgebra $\cA$ of $\cL(\cH)$
such that $[T,S]\in\cK(\cH)$ for all $T,S\in\cA$
(or, equivalently, all operators in $\cA$ are essentially normal)
has a proper F\o lner sequence.
C*-algebras singly generated by an essentially normal operator were considered,
e.g., in \cite{FeldmanMcGuire06,FeldmanMcGuire08}.
For a nice introduction to the single generator problem we refer to \cite[Section~1]{ThielWint}
(see also \cite{Nagisa}, \cite{ChoJorg2009}).

Here we will prove the following variation of a result in \cite{Olsen76}
which might be useful when addressing questions about F\o lner sequences
for operators:
roughly, every separable C*-algebra $\cA$ of operators can be embedded into a
singly generated C*-algebra acting on a larger Hilbert space.
Notice that the word ``separable'' here
refers to separability of the metric, associated to the operator norm.
It is clear that a C*-subalgebra of $\cL(\cH)$ is separable if and only if it is countably
generated. In general if $\cA$ is unital its image under the embedding need
not be a unital C*-subalgebra of the larger algebra.

\begin{proposition}
Let $\cA$ be a unital C*-subalgebra of $\cL(\cH)$.
Then the following two assertions are equivalent.

\begin{enumerate}
\item[(i)] $\cA$ is separable.

\item[(ii)] There exists a separable Hilbert space
$\cKK=\cH\oplus \cH'$, and a singly
generated C*-subalgebra $\cB\subset \cL(\cKK)$ such that
$
\cA\oplus 0\subset \cB,
$
where $\cA\oplus 0=\big\{A\oplus 0\in \cL(\cKK)\mid A\in \cA\big\}$.
\end{enumerate}

\end{proposition}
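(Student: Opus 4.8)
I would dispatch (ii)$\Rightarrow$(i) immediately. If $\cB=C^*(c)$ is singly generated, then the $*$-polynomials in $c$ and $c^*$ with coefficients in $\Q+i\Q$ form a countable norm-dense subset of $\cB$, so $\cB$ is separable; a norm-closed subalgebra of a separable C*-algebra is separable, and $\cA$ is isometrically $*$-isomorphic to $\cA\oplus0\subseteq\cB$, so $\cA$ is separable.

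For (i)$\Rightarrow$(ii) the plan is to reduce the statement to, and then repackage, the single-generation theorem of \cite{Olsen76}. First I would fix a countable subset $\{a_n\}_{n\ge1}$ of the closed unit ball of $\cA$ that is norm-dense in it, with $a_1=\1$, so that $C^*(\{a_n\}_n)=\cA$. Next I realize the enlargement canonically: put $\cKK:=\cH\otimes\ell^2(\N)$, identify $\cH$ with $\cH\otimes\C e_1\subseteq\cKK$ and set $\cH':=\cH\otimes(\C e_1)^\perp$, so $\cKK=\cH\oplus\cH'$; under this identification $\cA\oplus0$ is exactly the corner $\cA\otimes e_{11}$ of $\cA\otimes\cK(\ell^2)$. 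Hence it suffices to exhibit a single operator $c\in\cL(\cKK)$ with $C^*(c)\supseteq\cA\otimes\cK(\ell^2)$ (the construction in fact gives equality) and then take $\cB:=C^*(c)$. Following \cite{Olsen76}, $c$ is assembled from a block-diagonal part $\sum_n(\delta_n\,\1+\eta_n a_n)\otimes e_{nn}$ carrying pairwise distinct scalar ``tags'' $\delta_n\to0$ together with the data $a_n$, plus a small operator-weighted shift $\sum_n\gamma_n(\1\otimes e_{n+1,n})$ supplying the matrix units, where the tags are chosen with pairwise distinct differences and $\eta_n,\gamma_n$ decay fast enough that the relevant spectral clusters of a suitable self-adjoint $*$-polynomial in $c$ (e.g. of $c^*c$) remain isolated. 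Continuous functional calculus then puts the projections onto the tagged subspaces, i.e. the $\1\otimes e_{nn}$, into $C^*(c)$; compressing $c$ by these recovers $a_n\otimes e_{nn}$; combining with the matrix units coming from the shift yields $a\otimes e_{ij}$ for all $a\in\cA$ and all $i,j$, so $\cA\otimes\cK(\ell^2)\subseteq C^*(c)$. Since $\delta_n\to0$ forces $0\in\sigma(c^*c)$, the algebra $\cB=C^*(c)$ need not be unital even when $\cA$ is, which is the subtlety flagged before the statement; everything else in the identification $\cA\oplus0=\cA\otimes e_{11}\subseteq\cA\otimes\cK\cong\cB$ is formal.

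The step I expect to be the real work — and the reason the result is non-trivial — is the quantitative design of $c$. Because the tags $\delta_n$ lie in a bounded set they must accumulate, so the spectral gaps of the tagging part tend to $0$; the operator-valued perturbations (the data $\eta_n a_n$ and the shift weights $\gamma_n$) must therefore be tuned so that each cluster of $\sigma(c^*c)$ is separated from the rest by a positive (though $n$-dependent) distance, which is precisely what makes the cut-off functions continuous on $\sigma(c^*c)$ and hence puts the tag projections into $C^*(c)$; yet the perturbations must not be so small that the data $a_n$ and the matrix units become unrecoverable under the subsequent compressions, and one still has to check norm-convergence of the operator series defining $c$. Making all of these constraints hold simultaneously is the heart of the argument (and is carried out in \cite{Olsen76}), while the reduction to a corner of $\cA\otimes\cK$ and the non-unital bookkeeping are routine.
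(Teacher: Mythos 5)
Your proposal is correct and follows essentially the same route as the paper: both reduce (i)$\Rightarrow$(ii) to realizing $\cA\oplus 0$ as the corner $\cA\otimes p$ of $\cA\otimes\cK(\ell^2)$ and then invoking the Olsen--Zame theorem that $\cA\otimes\cK$ is singly generated. The only difference is that the paper cites that theorem as a black box, whereas you additionally sketch its proof (the tagged diagonal plus weighted shift construction), which is not needed for the argument but does no harm.
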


\begin{proof}
The implication (ii) $\implies$ (i)
is obvious. To show the reverse implication denote by
$\cK(\gothh)$ the $C^*$-algebra of compact operators
on a separable Hilbert space $\gothh$.
Put $\cB=\cA\otimes \cK(\gothh)\subset \cL(\cH\otimes\gothh)$ (since $\cK(\gothh)$ is nuclear,
there is no ambiguity in the definition of the tensor product of these operator algebras;
see, e.g., \cite[Vol.~2, Chapter~11]{Kadison:Ringrose:12}).
Let $p\in \cK(\gothh)$ be a rank one
orthogonal projection onto the subspace $\langle e\rangle$ generated by a unit vector $e\in\gothh$.
The map $\Phi(a) :=  a\otimes p$ is an isometric
$*$-isomorphism from $\cA$ to $\cA\otimes\cK(\gothh)$ and define
$\cR:=\cH\otimes \gothh$.
It is clear that $\Phi(\cA)$ has the form
$\cA\oplus 0$ in the decomposition
$\cR =
\big(\cH\otimes \langle e \rangle\big)
\oplus
 \big(\cH\otimes \langle e \rangle^\perp\big)$.
Finally, by Theorem~8 in \cite{Olsen76}, $\cB$ is singly generated.
\end{proof}


\paragraph{\bf Acknowledgements:}
The first-named author wants to thank Chris Phillips for useful conversations
during his visit to CRM in January 2011. We are also indebted to Chris Phillips for
pointing to us Corollary~4 (or Theorem~9) in \cite{Olsen76}, which immediately
led to Proposition~\ref{no-Foelner}.


\end{document}